\documentclass[12pt]{amsart}
 \usepackage{amsmath}
 \usepackage{amsthm}
 \usepackage{amssymb}
 \usepackage{latexsym}
 \usepackage{graphicx}
 \usepackage{multicol}
 \usepackage{mathrsfs}
 \usepackage{pstricks, pst-node}
 \usepackage[all]{xy}
    \SelectTips{cm}{10}     
    \everyxy={<2.5em,0em>:} 
 \usepackage{fancyhdr}      
    \setlength{\hoffset}{-.1in}
    \addtolength{\textwidth}{.2in}
    \setlength{\voffset}{0in}
    \addtolength{\textheight}{.2in}
    \linespread{1.02}


 \newcounter{ctr}
 \theoremstyle{plain}
 \newtheorem{theorem}{Theorem}[section]
 
 \newtheorem*{lemma*}{Lemma}
 \newtheorem{lemma}[theorem]{Lemma}
 \newtheorem{corollary}[theorem]{Corollary}
 \newtheorem{proposition}[theorem]{Proposition}

 \theoremstyle{definition}
 \newtheorem{definition}[theorem]{Definition}

 \newtheorem{example}[theorem]{Example}




 \newcommand{\CC}{\ensuremath{\mathbb{C}}}

 \renewcommand{\H}{\ensuremath{\mathscr{H}}}
 
 \renewcommand{\hom}{\text{\rm Hom}}

  \newcommand{\idelm}{\ensuremath{id}}

 \renewcommand{\L}{\ensuremath{\mathscr{L}}}
 
 \newcommand{\lie}{\text{\rm Lie}}

 \newcommand{\RR}{\ensuremath{\mathbb{R}}}

 \newcommand{\ZZ}{\ensuremath{\mathbb{Z}}}

\renewcommand{\S}{\ensuremath{\mathcal{S}}}
\newcommand{\tsr}{\ensuremath{\otimes}}
\newcommand{\C}{\ensuremath{C^{\prime}}} 

\newcommand{\br}[1]{\ensuremath{\overline{#1}}}
\renewcommand{\u}{{\ensuremath{q^{1/2}}}}  
\newcommand{\ui}{\ensuremath{q^{-1/2}}} 
\newcommand{\uyw}{\ensuremath{(q^{1/2})^{\ell(y)-\ell(w_0)}}} 
\newcommand{\eH}{\ensuremath{{\widehat{\H}}}} 
\newcommand{\eW}{\ensuremath{{W_e}}}
\newcommand{\aW}{\ensuremath{{W_a}}}
\newcommand{\y}{\ensuremath{Y}} 
\newcommand{\leftexp}[2]{{\vphantom{#2}}^{#1}{#2}}
\newcommand{\leftsub}[2]{{\vphantom{#2}}_{#1}{#2}}
\newcommand{\lj}[2]{\ensuremath{{#1}_{#2}}}
\newcommand{\lJ}[2]{\ensuremath{{#1}^{#2}}}
\newcommand{\rj}[2]{\ensuremath{\leftsub{#2}{#1}}}
\newcommand{\rJ}[2]{\ensuremath{\leftexp{#2}{#1}}}
\newcommand{\alcove}{\ensuremath{\mathbf{A_0}}}
\newcommand{\bx}[1]{\ensuremath{\mathbf{B_{#1}}}}
\newcommand{\boxo}{\ensuremath{\bx{0}}}
\newcommand{\chamber}{\ensuremath{\mathbf{C_0}}}
\newcommand{\chamberf}{\ensuremath{\mathbf{C_{f0}}}}
\newcommand{\fcoroots}{\ensuremath{{R'_f}^{\!\!\vee}}}
\newcommand{\rsd}[1]{\ensuremath{\hat{#1}}}

\newcommand{\be}{\begin{equation}}
\newcommand{\ee}{\end{equation}}

\renewcommand{\ng}{\text{-}}

\begin{document}
\author{Jonah Blasiak}
\title{A factorization theorem for affine Kazhdan-Lusztig basis elements}

\begin{abstract}
The lowest two-sided cell of the extended affine Weyl group $W_e$ is the set $\{w \in W_e: w = x \cdot w_0 \cdot z, \text{ for some } x,z \in W_e\}$, denoted $W_{(\nu)}$. We prove that for any $w \in W_{(\nu)}$, the canonical basis element $\C_w$ can be expressed as $\frac{1}{[n]!} \chi_\lambda({\y}) \C_{v_1 w_0} \C_{w_0 v_2}$, where $\chi_\lambda({\y})$ is the character of the irreducible representation of highest weight $\lambda$ in the Bernstein generators, and $v_1$ and  $v_2^{-1}$ are what we call primitive elements.  Primitive elements are naturally in bijection with elements of the finite Weyl group $W_f \subseteq W_e$, thus this theorem gives an expression for any $\C_w$, $w \in W_{(\nu)}$ in terms of only finitely many canonical basis  elements.  After completing this paper, we realized that this result was first proved by Xi in \cite{X}.  The proof given here is significantly different and somewhat longer than Xi's, however our proof has the advantage of being mostly self-contained, while Xi's makes use of results of Lusztig from \cite{L Jantzen} and Cells in affine Weyl groups I-IV and the positivity of Kazhdan-Lusztig coefficients.
\end{abstract}

\maketitle

\section{Introduction}
  This work came about from a desire to better understand the polynomial ring $\CC[y_1,\ldots,y_n]$ as an $\CC\S_n$-module in a way compatible with the multiplicative structure of the polynomial ring. The hope was that a quantization of the polynomial ring and its $\S_n$ action would rigidify the structure and make a combinatorial description more transparent. This hope has largely been realized by the type $A$ extended affine Hecke algebra and its canonical basis and, indeed, this is the subject of the forthcoming paper \cite{B3}.

  While the use of crystal bases of quantum groups to do tableau combinatorics is well established and used prolifically, the connection between combinatorics and canonical bases of Hecke algebras is less developed. This may be because such combinatorics involves computing the weights $\mu$ of $W$-graph edges, which is difficult, or finding a way to determine cells that avoids such computation. The main theorem of this paper simplifies the description of some canonical basis elements, the combinatorial implications of which will be discussed in \cite{B3}.  We show that the canonical basis elements $\C_w$ of the extended affine Hecke algebra $\H(W_e)$, for $w$ in the two-sided cell $W_{(\nu)}$, can be expressed in terms of a finite subset of the canonical basis and symmetric polynomials in the Bernstein basis.  After completing this paper, we realized that this result was first proved by Xi in \cite{X}. This paper gives a different proof that is somewhat longer but relies on less machinery than Xi's.

  This theorem allows us to construct a $q$-analogue of the ring of coinvariants $\CC[y_1,\ldots,y_n]/(e_1,\ldots,e_n)$ with a canonical basis (see \cite{B3}), but this application does not use the full strength of this theorem.  It may also be possible to use the theorem to construct a $q$-analogue of the ring of coinvariants in other types.  In addition, this theorem may shed some light on computing Kazhdan-Lusztig polynomials, although this is not our main focus.

The remainder of this paper is organized as follows.  In \textsection \ref{s Weyl} we introduce the extended affine Weyl group and its Hecke algebra along with the Bernstein presentation and canonical bases. In \textsection \ref{section primitive} we define the \emph{primitive} elements of an extended affine Weyl group associated to a simply connected reductive algebraic group  $G$ over  $\CC$, which are in bijection with elements of the associated finite Weyl group  $W_f$. Finally, \textsection\ref{section Fact Theorem} is devoted to a proof of our main result (Corollary \ref{main corollary}), which expresses a canonical basis element $\C_w$, $w \in W_{(\nu)}$   in terms of the $\C_x$ for $x$ primitive.

\section{Weyl groups and Hecke algebras}
\label{s Weyl}
Here we introduce Weyl groups and Hecke algebras in full generality and then specialize to the affine case.  We also recall the important presentation of the extended affine Hecke algebra due to Bernstein and state a crucial theorem of Lusztig expressing certain canonical basis element in terms of the Bernstein generators.  This material is explained more fully in \cite{H}, and \cite{Hu} gives a good exposition of the more basic notions.

\subsection{}
\label{ss root system}
A \emph{root system} $(X, (\alpha_i), (\alpha^\vee_i))$ consists of a finite-rank free abelian group $X$, its dual $X^\vee := \hom(X,\ZZ)$, simple roots $\alpha_1, \ldots, \alpha_n \in X$, and simple coroots $\alpha^\vee_1, \ldots, \alpha^\vee_n \in X^\vee$ such that the  $n \times n$ matrix with $(i,j)$-th entry $\langle \alpha_j, \alpha^\vee_i \rangle$ is a generalized Cartan matrix. Assume that the root system is non-degenerate, i.e. the simple roots are linearly independent.

Let $W$ be the Weyl group of this root system and $S = \{s_1, \dots, s_{n}\}$ the set of simple reflections.
The group $W$ is the subgroup of automorphisms of the lattice $X$ (and of  $X^\vee$) generated by the reflections $s_i$. Let  $R, R_+, R_-, Q$  be the roots, positive roots, negative roots, and root lattice.


The \emph{dominant weights}  $X_+$ and the \emph{dominant regular weights}  $X_{++}$ are the cones in $X$ given by
\be \begin{array}{rl}
X_+ &= \{\lambda \in X : \langle \lambda, \alpha_i^\vee \rangle \geq 0 \text{ for all } i\}, \\
X_{++} &=  \{\lambda \in X : \langle \lambda, \alpha_i^\vee \rangle \geq 1 \text{ for all } i\}.
\end{array}\ee

The pair $(W,S)$ is a Coxeter group with length function $\ell$ and Bruhat order $\leq$.
The \emph{length} $\ell(w)$ of $w$ is the minimal $l$ such that $w=s_1\ldots s_l$ for some $s_i\in S$, also equal to $|R_- \cap w(R_+)|$. If $\ell(uv)=\ell(u)+\ell(v)$, then $uv = u\cdot v$ is a \emph{reduced factorization}. The notation $L(w), R(w)$ will denote the left and right descent sets of $w$.

It is often convenient to use the geometry and topology of the real vector space $X^\vee_\RR :=X^\vee \tsr_\ZZ \RR$. This space contains the \emph{root hyperplanes} $H_\alpha = \{x \in X^\vee_\RR : \langle \alpha,x \rangle = 0\}$. The connected components of $X^\vee_\RR - \bigcup_\alpha H_\alpha$ are \emph{Weyl chambers} and the \emph{dominant Weyl chamber} is $\chamber =\{x\in X^\vee_\RR : \langle \alpha,x \rangle > 0 \text{ for all } \alpha\in R_+\}$. Its closure is a fundamental domain for the action of $W$ on $X^\vee_\RR$.

Certain relations in the Bruhat order may be understood in several ways. The following are equivalent:
\be
\label{e hyperplane separate}
\begin{array}{rl}
\text{(i)}& s_\alpha w < w. \\
\text{(ii)}& \text{one of $\alpha$ and $w^{-1}(\alpha)$ is in $R_+$ and the other is in $R_-$.} \\
\text{(iii)}& \chamber \text{ and } w(\chamber) \text{ are on opposite sides of }H_\alpha.
\end{array}
\ee
For the equivalence of (i) and (ii), see \cite[\textsection5.7]{Hu}, while the equivalence of (ii) and (iii) follows from the identity  $\langle \alpha,w(\chamber) \rangle = \langle w^{-1}(\alpha),\chamber\rangle$, where for a set $Z \subseteq X^\vee_\RR,$  $\langle Z, \alpha \rangle$ is defined to be the set $\{\langle z, \alpha \rangle: z \in Z\}$.


\subsection{}
For any $J\subseteq S$, the \emph{parabolic subgroup} $W_J$ is the subgroup of $W$ generated by $J$. Each left (resp. right) coset of $wW_J$ (resp. $W_Jw$) contains an unique element of minimal length called a minimal coset representative. The set of all such elements is denoted $W^J$ (resp. $\leftexp{J}{W}$). For any $w \in W$, define $\lJ{w}{J}$, $\rj{w}{J}$ by
\be w=\lJ{w}{J} \cdot \rj{w}{J},\ \lJ{w}{J} \in W^J,\ \rj{w}{J} \in W_J.\ee
Similarly, define $\lj{w}{J}$, $\rJ{w}{J}$ by
\be w= \lj{w}{J} \cdot \rJ{w}{J},\ \lj{w}{J} \in W_J,\ \rJ{w}{J} \in \leftexp{J}W.\ee

\subsection{}
\label{ss w0}
Any finite Weyl group $W_f$ contains a unique longest element $w_0$. The action of $w_0$ on $R$ satisfies $w_0(\alpha_i) = -\alpha_{d(i)}$ for some automorphism $d$ of the Dynkin diagram. Left (or right) multiplication by $w_0$ induces a Bruhat order-reversing involution on $W_f$ and therefore takes elements of length $l$ to elements of length $\ell(w_0)-l$. In particular, it takes the elements of length $\ell(w_0)-1$ to the simple reflections. Conjugation by $w_0$ leaves stable the set of simple reflections $S$ and acts on  $S$ by the automorphism $d$.


\subsection{}
\label{ss extended affine}
Let $(Y, \alpha'_i, \alpha'^\vee_i), i \in [n]$ be the finite root system specifying a reductive algebraic group G over  $\CC$.  Denote the Weyl group, simple reflections, roots, and root lattice by $W_f, S_f, R'_f, Q'_f$. The \emph{extended affine Weyl group} is the semidirect product
\[\eW :=Y \rtimes W_f.\]
Elements of $Y \subseteq \eW$ will be denoted by the multiplicative notation $y^\lambda, \lambda\in Y$.

The group $W_e$ is also equal to $\Pi \ltimes W_a$, where $W_a$ is the Weyl group of an affine root system we will now construct and $\Pi$ is an abelian group. Let $X=Y^\vee\oplus\ZZ$ and $\delta$ be a generator of $\ZZ$. The pairing of $X$ and $X^\vee$ is obtained by extending the pairing of $Y$ and $Y^\vee$ together with $\langle\delta, Y\rangle=0$. Let $\phi'$ be the dominant short root of $(Y, \alpha'_i, \alpha'^\vee_i)$ and $\theta=\phi'^\vee$ the highest coroot. For $i\neq 0$ put $\alpha_i=\alpha'^\vee_i$ and $\alpha^\vee_i=\alpha'_i$; put $\alpha_0=\delta-\theta$ and $\alpha^\vee_0=-\phi'$. Then $(X, \alpha_i, \alpha^\vee_i)$, $i \in [0,n]$ is an affine root system. Let $W_a$ denote its Weyl group and use the notation of \textsection\ref{ss root system} for its roots, root lattice, etc. The roots $R$ may be expressed in terms of the coroots $\fcoroots$ of the system $(Y, \alpha'_i, \alpha'^\vee_i)$ as $R = \fcoroots + \ZZ\delta$, and the positive roots $R_+$ as $R_+ = (\fcoroots + \ZZ_{>0}\delta) \cup \fcoroots_+$.

The abelian group $Q'_f$ is realized as a subgroup of $W_a$ acting on $X$ and $X^\vee$ by translations: for $\beta' = \alpha'_i \in R'_f \subseteq Q'_f$ $(i \in [n])$, define $y^{\beta'} = s_{\alpha'^\vee_i - \delta} s_{\alpha'^\vee_i}$. Then
\be y^{\beta'}(x) = x - \langle x, \beta' \rangle\delta, \quad x \in X, \ee
\be \label{e translation action}
y^{\beta'}(x^\vee) = x^\vee + \langle \delta, x^\vee \rangle \beta', \quad x^\vee \in X^\vee, \ee
and for any $\beta' \in Q'_f$, these equations define an action of $Q'_f$ on $X$ and $X^\vee$.
This action of $Q'_f$ by translations extends to an action of $Y$, which realizes $W_e$ as a subgroup of the automorphisms of $X$ and $X^\vee$. The inclusion $W_a\hookrightarrow W_e$ is given on simple reflections by $s_i \mapsto s_i$ for $i\neq 0$ and $s_0 \mapsto y^{\phi'}s_{\phi'}$. The subgroup $\aW$ is normal in $\eW$ with quotient $W_e/W_a \cong Y/Q'_f$, denoted $\Pi$. And, as was our goal, we have $W_e=\Pi\ltimes W_a$.

Let $H=\{x\in X^\vee_\RR:\langle\delta,x\rangle=1\}$ be the \emph{level 1 plane}. It follows from (\ref{e translation action}) that the action of $W_e$ on $X^\vee$ restricts to an action of $W_e$ on $H$. The space $H$ contains the affine hyperplanes $h_\alpha := H_\alpha \cap H, \alpha \in R$. The connected components of $H - \bigcup_{\alpha \in R} h_\alpha$ are \emph{alcoves}, and the \emph{basic alcove} is $\alcove= H \cap \chamber$. Its closure is a fundamental domain for the action of $W_a$ on $H$, and its stabilizer for the action of $W_e$ is $\Pi$. We also distinguish the \emph{finite dominant Weyl chamber} $\chamberf=\{x\in H : \langle \alpha,x \rangle > 0 \text{ for all } \alpha\in \fcoroots\}$.

A basic fact we will make frequent use of is that any element $w \in W_e$ can be written uniquely as a product
 \begin{equation}\label{3factors eq}
 w =  u \cdot y^{\beta} v,
\end{equation} such that $u, v \in W_f$, $\beta \in Y$, and $y^{\beta} v$ is minimal in its right coset $W_f w$.  This last condition implies $u \cdot (y^{\beta} v)$ is a reduced factorization and $\beta \in Y_+$.  In terms of the alcove picture, $y^{\beta} v$ takes $\alcove$ to $y^{\beta} v(\alcove) \subseteq \chamberf$ and then $u$ moves this alcove into $u(\chamberf)$.  Note that $\beta \in Y_{++}$ forces $y^{\beta} v$ to be minimal in $W_f y^\beta v$ for any $v \in W_f$.   

The group $\eW$ is an extended Coxeter group. The length function and partial order on $W$ extend to $\eW$: $\ell(\pi v) = \ell(v)$, and $\pi v \leq \pi' v'$ if and only if $\pi = \pi'$ and $v \leq v'$, where $\pi, \pi' \in \Pi$, $v, v' \in W$. The definitions of left and right descent sets and reduced factorization carry over identically.

\subsection{}
\label{ss type A}
We will give examples in type  $A$ throughout the paper, and  now fix notation for this special case.  See  \cite{B3,X2} for a more extensive treatment of this case.

For $G = GL_n$, the lattices $Y$ and $Y^\vee$ are equal to $\ZZ^n$ and $\alpha'_i=\epsilon_i-\epsilon_{i+1}$, $\alpha'^\vee_i=\epsilon^\vee_i-\epsilon^\vee_{i+1}$, where $\epsilon_i$ and $\epsilon^\vee_i$ are the standard basis vectors of $Y$ and $Y^\vee$. The finite Weyl group $W_f$ is $\S_n$ and the subgroup $\Pi$ of $W_e$ is $\ZZ$. The element $\pi=y_1s_1s_2\ldots s_{n-1}\in\Pi$ is a generator of $\Pi$. This satisfies the relation $\pi s_i=s_{i+1}\pi$, where the subscripts of the $s_i$ are taken mod $n$.

For $G = SL_n$, the lattice $Y$ is the quotient of the weight lattice for  $GL_n$ by $\ZZ\varepsilon$, where $\varepsilon = \epsilon_1 + \ldots + \epsilon_n$ and the simple roots are the images of those for $GL_n$. The dual lattice  $Y^\vee$ is the coroot lattice of  $GL_n$, and the coroots are the same as those for $GL_n$.  The finite Weyl group $W_f$ is the same as for $GL_n$ and the subgroup $\Pi = \langle\pi \rangle$ is that for  $GL_n$ with the additional relation $\pi^n = \idelm$.

Another description of $\eW$ for  $GL_n$, due to Lusztig, identifies it with the group of permutations $w: \ZZ \to \ZZ$ satisfying $w(i+n) = w(i)+n$ and $\sum_{i = 1}^n (w(i) - i) \equiv 0$ mod $n$. The identification takes $s_i$ to the permutation transposing $i+kn$ and $i+1+kn$ for all $k \in \ZZ$, and takes $\pi$ to the permutation $k \mapsto k+1$ for all $k \in \ZZ$. We take the convention of specifying the permutation of an element $w \in \eW$ by the word
\[{\small w(1)\ w(2)\dots w(n).}\]
We refer to this as the \emph{word of} $w$, also written as $w_1 w_2 \cdots w_n$; this is understood to be part of an infinite word so that $w_i = i-\rsd{i} + w_{\rsd{i}}$, where $\rsd{i}$ denotes the element of $[n]$ congruent to $i$ mod  $n$. For example, if $n = 4$ and $w = \pi^2 s_2 s_0 s_1$, then the word of $w$ is $5\ 2\ 4\ 7$.

The extended affine Weyl group for  $SL_n$ may be obtained from this permutation group by quotienting by the subgroup generated by $\pi^n = n+1 \ n+2 \cdots 2n$.

\subsection{}
Let $A = \ZZ[\u,\ui]$ be the ring of Laurent polynomials in the indeterminate $\u$ and $A^{-}$ be the subring $\ZZ[\ui]$. The \emph{Hecke algebra} $\H(W)$ of a (extended) Coxeter group $(W, S)$ is the free $A$-module with basis $\{T_w :\ w\in W\}$ and relations generated by
\begin{equation}\label{hecke eq}
\begin{array}{ll}T_uT_v = T_{uv} & \text{if } uv = u\cdot v\ \text{is a reduced factorization}\\
 (T_s - \u)(T_s + \ui) = 0 & \text{if } s\in S.\end{array}
\end{equation}

The bar-involution, $\br{\cdot}$, of $\H$ is the additive map from $\H$ to itself extending the involution $\br{\cdot}$: $A\to A$ given by $\br{q} = q^{-1}$ and satisfying $\br{T_w} = T_{w^{-1}}^{-1}$.
 Define the lattice
\[ \L = A^{-}\langle T_w : w \in W \rangle. \]
 \begin{theorem}[Kazhdan-Lusztig \cite{KL}]
 For each $w \in W$, there is  a unique element $\C_w \in \H(W)$ such that $\br{\C_w} = \C_w$ and $\C_w$ is congruent to $T_w \mod \ui \L$.
 \end{theorem}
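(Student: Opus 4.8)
The plan is to prove uniqueness first and then existence by induction on the Bruhat order, following the standard Kazhdan--Lusztig argument adapted to the extended Coxeter setting.

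\emph{Uniqueness.} Since bar is a ring involution fixing $T_e$ (the identity $e$) and sending $T_s \mapsto T_s - (\u - \ui)$, an easy induction on length gives a triangular expansion $\br{T_w} = \sum_{y \le w}\br{R_{y,w}}\,T_y$ with $R_{w,w}=1$ and $R_{y,w}\in A$ (the $R$-polynomials): expanding $\prod_j(T_{s_{i_j}} - (\u-\ui))$ for a reduced word of $w$, the term keeping every factor gives $T_w$ with coefficient $1$, and every other term is a $T_z$ with $z<w$. If $C, C'$ both satisfy the conditions, then $h := C - C'$ is bar-invariant and lies in $\ui\L$, i.e. $h = \sum_y a_y T_y$ with every $a_y \in \ui A^{-}$. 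I would show such an $h$ must vanish: choosing $w$ maximal in the support of $h$ and reading off the coefficient of $T_w$ on both sides of $\br h = h$, the triangularity together with $R_{w,w}=1$ forces $\br{a_w} = a_w$; but $a_w \in \ui A^{-}$ involves only strictly negative powers of $\u$, while $\br{a_w}$ involves only strictly positive powers, so $a_w = 0$, contradicting maximality. Hence $h=0$ and $C=C'$.

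\emph{Existence.} I would build $\C_w$ by induction on $\ell(w)$, simultaneously over all $w$. For the identity take $\C_e = T_e$, and for a simple reflection one checks directly that $\C_s = T_s + \ui T_e$ is bar-invariant and $\equiv T_s \bmod \ui\L$. Elements of $\Pi$ cause no trouble: $T_\pi$ is bar-invariant (as $\ell(\pi)=0$ gives $T_\pi^{-1}=T_{\pi^{-1}}$, so $\br{T_\pi}=T_\pi$), so the construction for a general $w = \pi v$ reduces to that for $v$ via $\C_{\pi v}=T_\pi\C_v$. For the inductive step with $\ell(w)\ge 1$, pick $s$ with $sw < w$ and set $h := \C_s\,\C_{sw}$. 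This is bar-invariant, being a product of bar-invariant elements, and since $s\cdot(sw)=w$ is a reduced factorization one computes that the coefficient of $T_w$ in $h$ equals $1$, so $h = T_w + \sum_{y<w} h_y T_y$ with $h_y \in A$.

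The remaining, and genuinely delicate, step is to correct $h$ into $\C_w$ by subtracting bar-invariant combinations of the already-constructed $\C_y$, $y<w$. I would process the indices $y<w$ in decreasing Bruhat order: if $h_y \notin \ui A^{-}$, take the maximal such $y$; using the bar-invariance of $h$ (the coefficient of $T_y$ in $\br h - h$ vanishes) together with $R_{y,y}=1$, one shows that the obstruction---the non-negative-power part of $h_y$---is \emph{bar-symmetric}, hence of the form $\mu_y$ for a bar-invariant coefficient $\mu_y\in A$ (a degree argument in fact forces $\mu_y$ to be a symmetric Laurent polynomial, cancellable by a bar-invariant multiple of $\C_y$). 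Replacing $h$ by $h - \mu_y\C_y$ removes the obstruction at $y$ without disturbing the coefficients at indices $>y$, because $\C_y$ is supported on $\{z \le y\}$ with top coefficient $1$. Iterating downward drives every lower coefficient into $\ui A^{-}$ while preserving bar-invariance, yielding a bar-invariant $\C_w \equiv T_w \bmod \ui\L$; uniqueness then shows it is independent of the choice of $s$.

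I expect this last correction argument to be the main obstacle: one must verify carefully that each top offending coefficient is forced by bar-invariance to be bar-symmetric (so that it is cancellable by a bar-invariant multiple of a single previously-built $\C_y$) and that the downward processing never reintroduces a bad coefficient at a higher index. The key structural inputs are the triangularity of bar with $R_{w,w}=1$, the bar-invariance of the product $\C_s\C_{sw}$, and the inductive hypothesis that each $\C_y$ is bar-invariant with leading term $T_y$.
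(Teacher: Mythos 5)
The paper itself gives no proof of this statement: it is quoted from Kazhdan--Lusztig \cite{KL}, with only the extension of the setup to the extended Coxeter group $W_e$ (via $\ell(\pi v)=\ell(v)$ and $\pi v\leq \pi'v'$ iff $\pi=\pi'$, $v\leq v'$) being implicit. So there is nothing in the paper to compare against; your proposal is the classical argument, and in outline it is sound: triangularity of the bar involution with $R_{w,w}=1$, the support-maximality argument for uniqueness, reduction of $w=\pi v$ to $v\in W_a$ via bar-invariance of $T_\pi$, and downward correction of the bar-invariant element $\C_s\C_{sw}$ by bar-invariant multiples of the already-constructed $\C_y$, $y<w$. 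One detail you use silently is that the bar map is a ring homomorphism (needed so that $\C_s\C_{sw}$ and $T_\pi\C_v$ are bar-invariant); this must be checked from $\br{T_w}=T^{-1}_{w^{-1}}$, but it is standard.

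The step you flag as the main obstacle is stated incorrectly, though it is fixable --- and in fact easier than you think. The non-negative-power part of $h_y$ is essentially never bar-symmetric (a nonzero polynomial in positive powers of $\u$ cannot equal its bar, which involves only negative powers), and bar-invariance of $h$ does not force any symmetry on an individual coefficient: the coefficient equation reads $h_y-\br{h_y}=\sum_{z>y}\br{h_z}\,\br{R_{y,z}}$, whose right-hand side is not controlled. What is true --- and is all you need --- is the unconditional direct-sum decomposition $A=A^{\mathrm{bar}}\oplus \ui A^{-}$, where $A^{\mathrm{bar}}$ is the set of bar-invariant Laurent polynomials: writing $h_y=\sum_k a_k q^{k/2}$, set $\mu_y:=a_0+\sum_{k>0}a_k\bigl(q^{k/2}+q^{-k/2}\bigr)$; then $\mu_y$ is bar-invariant and $h_y-\mu_y\in \ui A^{-}$, and this decomposition is unique. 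Subtracting $\mu_y\C_y$ therefore preserves bar-invariance, kills the obstruction at $y$, and leaves all coefficients at indices $>y$ untouched; no appeal to bar-invariance of $h$ is needed to produce $\mu_y$ (bar-invariance of $h$ matters only so that the final corrected element is bar-invariant). With that substitution your downward induction closes and the argument is complete.
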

The set $\{\C_w : w\in W\}$ is an $A$-basis for $\H(W)$ called the \emph{canonical basis} or Kazhdan-Lusztig basis.

The coefficients of the $\C$'s in terms of the $T$'s are the \emph{Kazhdan-Lusztig polynomials} $P'_{x,w}$:
\be \C_w = \sum_{x \in W} P'_{x,w} T_x. \ee
(Our $P'_{x,w}$ are equal to $q^{(\ell(x)-\ell(w))/2}P_{x,w}$, where $P_{x,w}$ are the polynomials defined in \cite{KL}.)

\subsection{}
The \emph{extended affine Hecke algebra} $\eH$ is the Hecke algebra $\H(\eW)$. Just as the extended affine Weyl group $W_e$ can be realized both as $\Pi\ltimes W_a$ and $W_f\ltimes Y$, the extended affine Hecke algebra can be realized in two analogous ways:

The algebra $\eH$ contains the Hecke algebra $\H(\aW)$ and is isomorphic to the twisted group algebra $\Pi\cdot\H(W_a)$ generated by $\Pi$ and $\H(W_a)$ with relations generated by
$$\pi T_w=T_{\pi w \pi^{-1}}\pi$$
for $\pi\in\Pi$, $w\in W_a$.

There is also a presentation of $\eH$ due to Bernstein. For any $\lambda\in Y$ there exist $\mu,\nu\in Y_+$ such that $\lambda = \mu-\nu$. Define
\[\y^\lambda := T_{y^\mu}(T_{y^\nu})^{-1},\]
which is independent of the choice of $\mu$ and $\nu$. The algebra $\eH$ is the free $A$-module with basis $\{\y^\lambda T_w :\ w\in W_f,\lambda\in Y\}$ and relations generated by
\[ \begin{array}{ll}T_i\y^\lambda=\y^\lambda T_i & \text{if }\langle\lambda,\alpha'^\vee_i\rangle=0,\\
T^{-1}_i\y^\lambda T^{-1}_i=\y^{s_i(\lambda)} & \text{if }\langle \lambda,\alpha'^\vee_i\rangle=1,\\
(T_i - \u)(T_i + \ui) = 0 & \end{array}\]
for all $i \in [n]$, where $T_i := T_{s_i}$.

\subsection{}
Given $\lambda \in Y_+$, let $\chi_\lambda(Y) = \sum_\mu d_{\mu,\lambda} Y^\mu$, where $d_{\mu,\lambda}$ is the dimension of the $\mu$-weight space of the irreducible representation of $\lie(G)$ of highest weight $\lambda$.

\begin{theorem}[Lusztig {\cite[Proposition 8.6]{L}}]
\label{t Lusztig}
For $\lambda \in Y_+$, the canonical basis element $\C_{w_0 y^\lambda}$ can be expressed in terms of the Bernstein generators as
\[\C_{w_0 y^\lambda}=\chi_\lambda(\y)\C_{w_0}=\C_{w_0}\chi_\lambda(\y).\]
\end{theorem}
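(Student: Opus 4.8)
The plan is to identify $\chi_\lambda(\y)\C_{w_0}$ by the defining property of the canonical basis: $\C_{w_0 y^\lambda}$ is the unique bar-invariant element of $\eH$ that is congruent to $T_{w_0 y^\lambda}$ modulo $\ui\L$. So I would set $D:=\chi_\lambda(\y)\C_{w_0}$ and prove (i) $\br{D}=D$ and (ii) $D\equiv T_{w_0 y^\lambda}\pmod{\ui\L}$; the uniqueness theorem then yields $D=\C_{w_0 y^\lambda}$. The remaining equality $\chi_\lambda(\y)\C_{w_0}=\C_{w_0}\chi_\lambda(\y)$ comes for free from Bernstein's description of the center of $\eH$ as the ring of $W_f$-symmetric Laurent polynomials in the $\y^\mu$ (see \cite{H}): the Weyl character $\chi_\lambda(\y)$ is $W_f$-symmetric, hence central, so it commutes with $\C_{w_0}$ and I may compute $D$ on whichever side is convenient.

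For (i), recall that the bar involution is a ring homomorphism and that $\br{\C_{w_0}}=\C_{w_0}$ by definition; thus $\br{D}=\br{\chi_\lambda(\y)}\,\C_{w_0}$, and since $z\mapsto z\C_{w_0}$ is injective on the center (its $\y^\lambda T_{w_0}$-coordinate in the Bernstein basis returns the coefficient of $\y^\lambda$ in $z$), bar-invariance of $D$ is equivalent to $\br{\chi_\lambda(\y)}=\chi_\lambda(\y)$. Being a ring homomorphism, $\br{\cdot}$ carries the center into itself, so $\br{\chi_\lambda(\y)}$ is again $W_f$-symmetric; as the Weyl characters are singled out inside the center by their dominance-maximal monomial $\y^\lambda$ (with coefficient $1$), it suffices to check that $\br{\cdot}$ fixes that leading monomial, which a direct computation of $\br{\y^\mu}$ from the Bernstein relations confirms. (Alternatively one simply cites that $\br{\cdot}$ fixes $\ZZ[\y]^{W_f}$ pointwise.)

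For (ii), I would expand $D=\C_{w_0}\chi_\lambda(\y)=\sum_\mu d_{\mu,\lambda}\,\C_{w_0}\y^\mu$ in the $T$-basis. The highest weight $\mu=\lambda$ gives the expected leading term: since $\lambda\in Y_+$ we have $\y^\lambda=T_{y^\lambda}$, and $x\cdot y^\lambda$ is a reduced factorization for every $x\in W_f$, so with $\C_{w_0}=\sum_{x\in W_f}(\u)^{\ell(x)-\ell(w_0)}T_x$,
\[
\C_{w_0}\y^\lambda=\sum_{x\in W_f}(\u)^{\,\ell(x)-\ell(w_0)}\,T_{xy^\lambda}\equiv T_{w_0 y^\lambda}\pmod{\ui\L},
\]
the $x=w_0$ summand contributing $T_{w_0 y^\lambda}$ with coefficient $1$ and the others coefficients in $\ui A^{-}$. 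It then remains to show that the lower weights $\mu<\lambda$ contribute nothing to the class of $D$ modulo $\ui\L$: concretely, that each $\C_{w_0}\y^\mu$ lies in $\L$ and has its $T_{w_0 y^\lambda}$-coefficient in $\ui A^{-}$.

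I expect this last point to be the main obstacle, since for non-dominant $\mu$ the element $\y^\mu$ involves inverse generators and does not itself lie in $\L$, so membership of the product in $\L$ is already a genuine claim rather than a formality. I would attack it using the Demazure--Lusztig relations implied by the Bernstein presentation, which describe how $T_i$ moves past $\y^\mu$, together with the symmetrizer identities $T_i\C_{w_0}=\u\C_{w_0}=\C_{w_0}T_i$ for the finite simple reflections $s_i$ (immediate from the displayed formula for $\C_{w_0}$). These allow me to rewrite $\y^\mu\C_{w_0}$ in terms of $\y^{\mu^+}\C_{w_0}$, where $\mu^+$ is the dominant conjugate of $\mu$, plus correction terms supported on strictly smaller weights, so that an induction on the dominance order confines every lower-weight contribution; a Bruhat comparison $w_0 y^{\mu^+}<w_0 y^\lambda$ for dominant $\mu^+<\lambda$ then keeps all of these strictly below $w_0 y^\lambda$. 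Assembling the highest-weight term with these strictly smaller contributions yields (ii), and together with (i) the uniqueness theorem gives $\chi_\lambda(\y)\C_{w_0}=\C_{w_0 y^\lambda}$.
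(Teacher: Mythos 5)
A preliminary remark: the paper never proves this statement---it is imported as a black box from Lusztig \cite[Proposition 8.6]{L}---so your proposal can only be measured against the standard argument (essentially Lusztig's own), and your skeleton is exactly that argument: centrality of $\chi_\lambda(\y)$ via Bernstein's theorem, then identification of $\chi_\lambda(\y)\C_{w_0}$ with $\C_{w_0 y^\lambda}$ through the uniqueness characterization (bar-invariance plus congruence to $T_{w_0y^\lambda}$ mod $\ui\L$). The problems are in the two verifications. In (i), the reduction to $\br{\chi_\lambda(\y)}=\chi_\lambda(\y)$ is fine, but the ``leading monomial'' argument is not: a $W_f$-symmetric Laurent polynomial is not determined by its dominance-maximal monomial (the orbit sum $\sum_{\mu\in W_f\lambda}\y^\mu$ and $\chi_\lambda(\y)$ share the same one), so knowing that $\br{\chi_\lambda(\y)}$ is central and has the right coefficient on $\y^\lambda$ does not force it to equal $\chi_\lambda(\y)$. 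What closes this step is the conjugation formula $\br{\y^\mu}=T_{w_0}\,\y^{w_0(\mu)}\,T_{w_0}^{-1}$ (for dominant $\mu$ it follows from $\br{T_{y^\mu}}=(T_{y^{-\mu}})^{-1}$ and the reduced identity $w_0\cdot y^{-w_0(\mu)}=y^{-\mu}\cdot w_0$, then extend multiplicatively), after which $\br{\chi_\lambda(\y)}=T_{w_0}\chi_\lambda(\y)T_{w_0}^{-1}=\chi_\lambda(\y)$ by $W_f$-invariance of the weight multiplicities and centrality. Your parenthetical fallback cites precisely this fact, so this gap is reparable, but your primary argument for (i) does not stand on its own.

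The serious gap is in (ii), exactly where you predicted the main obstacle. Your mechanism---rewrite $\C_{w_0}\y^\mu$ via Demazure--Lusztig relations in terms of $\C_{w_0}\y^{\mu^+}$ ``plus correction terms supported on strictly smaller weights,'' then invoke $w_0y^{\mu^+}<w_0y^\lambda$ for dominant $\mu^+<\lambda$---says nothing about the weights $\mu\in W_f\lambda\setminus\{\lambda\}$, for which $\mu^+=\lambda$ and the Bruhat comparison degenerates to an equality. Moreover the corrections are not supported on strictly smaller weights. With $m=\langle\mu,\alpha'^\vee_i\rangle>0$ and $\C_{w_0}T_i=\u\,\C_{w_0}$, the relation gives
\[ \C_{w_0}\y^{s_i(\mu)}=\ui\,\C_{w_0}\y^{\mu}T_i-(1-q^{-1})\,\C_{w_0}\bigl(\y^{\mu}+\y^{\mu-\alpha'_i}+\cdots+\y^{s_i(\mu)+\alpha'_i}\bigr), \]
and the string correction contains $\y^{\mu}$ itself. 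Take $\mu=\lambda$: since $w_0y^\lambda s_i<w_0y^\lambda$ when $\langle\lambda,\alpha'^\vee_i\rangle>0$, the $x=w_0$ summand of $\ui\,\C_{w_0}\y^\lambda T_i$ contributes $\ui(\u-\ui)=1-q^{-1}$ to the coefficient of $T_{w_0y^\lambda}$, while the correction contributes $-(1-q^{-1})$; only this exact cancellation places the $T_{w_0y^\lambda}$-coefficient of $\C_{w_0}\y^{s_i(\lambda)}$ in $\ui A^{-}$. Note that $1-q^{-1}$ lies in $A^{-}$ but not in $\ui A^{-}$, so a term-by-term estimate of the kind you describe cannot deliver the congruence mod $\ui\L$ that your uniqueness argument requires: you must either track these cancellations through the whole induction or reorganize the computation (for instance by $W_f$-orbit sums). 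This cancellation is the real content of Lusztig's proposition, and it is the step the proposal gestures at rather than proves.
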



\section{Primitive elements}
\label{section primitive}
\subsection{}
The primitive elements of $W_e$ that we are about to define are most natural in the case $G$ is simply connected, so let us assume this from now on. This is equivalent to the assumption that \emph{fundamental weights} $\varpi_i$, $i \in [n]$ exist and are $\ZZ$-basis for $Y$. (The weight $\varpi_i$ is defined by $\langle \varpi_i, \alpha'^\vee_j \rangle$ equals 1 if $i = j$ and 0 otherwise.)

We give three descriptions of primitive elements and show that they are equivalent in Proposition \ref{primitive proposition}. The first description is a geometric one from \cite{L Jantzen}. A \emph{box} is a connected component of $H - \bigcup_{i \in [n], k \in \ZZ} h_{\alpha_i + k\delta}$. We denote by $\boxo$ the box containing $\alcove$. It is bounded by the hyperplanes $h_{\alpha_i}$ and $h_{\alpha_i - \delta}$ for $i \in [n]$.

The action of $Y$ on $H$ by translations gives the action $y^\lambda(h_\alpha) = h_{\alpha-\langle \alpha,\lambda \rangle\delta}$ of $Y$ on hyperplanes. This further gives an action of $Y$ on boxes. Put $\lambda = \sum_{i = 1}^n c_i\varpi_i$, $c_i \in \ZZ$, and define $\bx{\lambda} = y^\lambda(\boxo)$. It is the box that is the bounded component of
\[H - \bigcup_{i} h_{\alpha_i - c_i\delta} - \bigcup_i h_{\alpha_i - (c_i+1)\delta}.\]
Thus our assumption that the fundamental weights are a basis for $Y$ implies that $Y$ acts simply transitively on boxes. Additionally, the $\bx{\lambda}$ for $\lambda \in Y^+$ are the connected components of $\chamberf - \bigcup_{i \in [n],k \in \ZZ} h_{\alpha_i + k\delta}$.

Set $\rho = \sum_{i = 1}^n \varpi_i$. One checks that $w_0(\boxo) = \bx{-\rho}$.

Given any $v\in W_f$, let $J=R(v)$. The element $v^{-1}$ takes the basic alcove $\alcove$ to some alcove $v^{-1}(\alcove)$ whose closure contains the origin. There is a unique minimal $\lambda\in Y_+$ such that $y^{-\lambda} v^{-1}(\alcove) \subseteq w_0(\chamberf)$. Minimality implies that $y^{-\lambda} v^{-1}(\alcove) \subseteq \bx{-\rho}$. It is a consequence of the next proposition that this $\lambda$ is given by
\[ \lambda = \sum_{s_i\in S_f\backslash J} \varpi_i. \]
Now define $w$ to be
\[ v \cdot y^\lambda= y^{v(\lambda)} v, \]
which is maximal in its left coset $wW_f$.
For example, for $G = SL_5$ (see \textsection\ref{ss type A}), if $v=5\ 2\ 3\ 1\ 4$, then $J=\{s_1,s_3\}$, $\lambda=(2,2,1,1,0)$, and $v(\lambda) = (1,2,1,0,2)$.


\begin{proposition}
\label{primitive proposition}
The following are equivalent for an element $w \in W_e$:
\setcounter{ctr}{0}
\begin{list}{\emph{(\roman{ctr})}} {\usecounter{ctr} \setlength{\itemsep}{1pt} \setlength{\topsep}{2pt}}
\item $w^{-1}(\alcove) \subseteq \boxo$.
\item $w(\alpha_i) \in (R_- +\delta) \cap R_+ = (\fcoroots_-+\delta) \cup \fcoroots_+$ for $i \in [n]$.
\item $w = vy^{\lambda}w_0$ such that $v \in W_f$, and $\lambda=\sum_{s_i\in S_f\backslash J} \varpi_i$, where $J=R(v)$ as above.
\end{list}
\end{proposition}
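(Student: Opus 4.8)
The plan is to prove $(\mathrm{i})\Leftrightarrow(\mathrm{ii})$ by a direct computation with the pairing, and then $(\mathrm{ii})\Leftrightarrow(\mathrm{iii})$ by expanding $w(\alpha_i)$ through the decomposition $W_e=Y\rtimes W_f$. For the first equivalence I would first note that on $H$ the walls $h_{\alpha_i}$ and $h_{\alpha_i-\delta}$ bounding $\boxo$ are $\{\langle\alpha_i,x\rangle=0\}$ and $\{\langle\alpha_i,x\rangle=1\}$ (since $\langle\delta,x\rangle=1$ on $H$), so $\boxo=\{x\in H:0<\langle\alpha_i,x\rangle<1,\ i\in[n]\}$. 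Using $\langle\alpha_i,w^{-1}(x)\rangle=\langle w(\alpha_i),x\rangle$, condition $(\mathrm i)$ says $0<\langle w(\alpha_i),x\rangle<1$ for all $x\in\alcove$ and all $i$. Writing the root $w(\alpha_i)=\beta+k\delta$ with $\beta\in\fcoroots$ nonzero, I would observe that for $x\in\alcove$ one has $\langle\beta,x\rangle\in(0,1)$ when $\beta\in\fcoroots_+$ — the lower bound because $\alcove\subseteq\chamberf$, the upper bound because $\beta\le\theta$ in the coroot order forces $\langle\beta,x\rangle\le\langle\theta,x\rangle<1$ — and correspondingly $\langle\beta,x\rangle\in(-1,0)$ when $\beta\in\fcoroots_-$. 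Requiring $\langle\beta,x\rangle+k\in(0,1)$ for all such $x$ then forces $k=0$ with $\beta\in\fcoroots_+$, or $k=1$ with $\beta\in\fcoroots_-$; that is, $w(\alpha_i)\in\fcoroots_+\cup(\fcoroots_-+\delta)$, which is $(\mathrm{ii})$, and every step is reversible.

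For $(\mathrm{iii})\Rightarrow(\mathrm{ii})$ I would compute $w(\alpha_i)$ directly. Writing $w=vy^\lambda w_0=y^{v(\lambda)}vw_0$ and using the translation action $y^\mu(x)=x-\langle x,\mu\rangle\delta$ on $X$ together with $w_0(\alpha_i)=-\alpha_{d(i)}$ (recall that for $i\in[n]$ the affine $\alpha_i$ is the finite simple coroot), one gets $w(\alpha_i)=-v(\alpha_{d(i)})+\langle\alpha_{d(i)},\lambda\rangle\,\delta$. Since $\lambda=\sum_{s_j\notin J}\varpi_j$, the coefficient $\langle\alpha_{d(i)},\lambda\rangle$ is $1$ if $s_{d(i)}\notin J$ and $0$ otherwise, while $-v(\alpha_{d(i)})\in\fcoroots_+$ precisely when $v(\alpha_{d(i)})\in\fcoroots_-$, i.e.\ when $s_{d(i)}\in J=R(v)$. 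The two cases match up exactly, so $w(\alpha_i)\in\fcoroots_+$ when $s_{d(i)}\in J$ and $w(\alpha_i)\in\fcoroots_-+\delta$ when $s_{d(i)}\notin J$, giving $(\mathrm{ii})$.

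The converse $(\mathrm{ii})\Rightarrow(\mathrm{iii})$ inverts this. Given $w$ satisfying $(\mathrm{ii})$, I would write $w=y^\gamma\tau$ with $\gamma\in Y$, $\tau\in W_f$, and set $v:=\tau w_0$ and $J:=R(v)$. From $w(\alpha_i)=\tau(\alpha_i)-\langle\tau(\alpha_i),\gamma\rangle\delta$, reading off finite part and $\delta$-coefficient against $(\mathrm{ii})$ gives $\langle\tau(\alpha_i),\gamma\rangle=0$ when $\tau(\alpha_i)\in\fcoroots_+$ and $=-1$ when $\tau(\alpha_i)\in\fcoroots_-$; rewriting via $\tau(\alpha_i)=-v(\alpha_{d(i)})$ this becomes $\langle v(\alpha_{d(i)}),\gamma\rangle=1$ if $s_{d(i)}\notin J$ and $=0$ if $s_{d(i)}\in J$. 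Because $G$ is simply connected the simple coroots form a $\ZZ$-basis of $Y^\vee$, hence so do the $v(\alpha_{d(i)})$, so these pairings determine $\gamma$ uniquely; comparing with $\langle v(\alpha_{d(i)}),v(\lambda)\rangle=\langle\alpha_{d(i)},\lambda\rangle$ for $\lambda=\sum_{s_j\notin J}\varpi_j$ identifies $\gamma=v(\lambda)$. Thus $w=y^{v(\lambda)}vw_0=vy^\lambda w_0$, which is $(\mathrm{iii})$; this simultaneously verifies the formula for $\lambda$ promised before the proposition.

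I expect the main difficulty to be bookkeeping rather than any single hard step: one must consistently remember that the affine roots $\alpha_i$, $i\in[n]$, are the finite simple coroots, so that $y^\mu$ and the pairing $\langle\cdot,\cdot\rangle$ act on the correct lattices, and one must track the diagram automorphism $d$ coming from $w_0$. Simple-connectedness is used essentially only in the last paragraph, where it guarantees that the simple coroots span $Y^\vee$ over $\ZZ$ and so pin down $\gamma$. A minor point to confirm is the open-versus-closed convention for $\alcove$ and $\boxo$ in $(\mathrm i)\Leftrightarrow(\mathrm{ii})$, but since $w^{-1}(\alcove)$ is a single open alcove contained in the union of alcoves $\boxo$, containment is detected by the strict inequalities above and the argument is unaffected.
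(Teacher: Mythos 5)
Your proof is correct, but it takes a genuinely different route from the paper's in both halves. For (i)$\Leftrightarrow$(ii) the paper never writes down the inequalities cutting out $\boxo$; it runs a chain of equivalences, one affine hyperplane $h_{\alpha_i+k\delta}$ at a time, through the general criterion (\ref{e hyperplane separate}) relating hyperplane separation of $\chamber$ and $w(\chamber)$ to the signs of $\alpha$ and $w^{-1}(\alpha)$, and then collapses the infinitely many conditions indexed by $k$ to the single condition (d). Your argument is instead metric: you identify $\boxo=\{x\in H:0<\langle\alpha_i,x\rangle<1\}$ and use that every $\beta\in\fcoroots_+$ satisfies $\beta\le\theta$, so $\langle\beta,x\rangle\in(0,1)$ on $\alcove$. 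That highest-coroot fact is standard and your use of it is sound, but it is an extra input that the paper's purely Coxeter-theoretic chain avoids; in exchange your version makes the geometry completely explicit. For (ii)$\Rightarrow$(iii) the paper defines $J$ and $\lambda$ directly from $w$, sets $v:=ww_0y^{-\lambda}$, and then must \emph{prove} $v\in W_f$ (writing $v=uy^\mu$ and showing $\mu=0$ via the basis of fundamental weights); you start from the decomposition $w=y^\gamma\tau$, so $v=\tau w_0\in W_f$ is automatic, and instead solve the linear system $\langle v(\alpha_{d(i)}),\gamma\rangle\in\{0,1\}$ for $\gamma$, using that the simple coroots are the dual basis to the $\varpi_i$. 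These are dual uses of simple-connectedness: the paper uses it to kill the translation part of its candidate $v$, you use it to pin down the translation part $\gamma$ of $w$; your organization is arguably a little cleaner since nothing needs to be shown to lie in $W_f$. The (iii)$\Rightarrow$(ii) computation is the same in both proofs.
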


\begin{proof}
The equivalence of (i) and (ii) is proved by observing that each of the statements below is equivalent to the next. The equivalence of (a) and (b) follows from (\ref{e hyperplane separate}).
\begin{list}{\emph{(\alph{ctr})}} {\usecounter{ctr} \setlength{\itemsep}{1pt} \setlength{\topsep}{2pt}}
\item $\alcove$ and $w^{-1}(\alcove)$ are on the same side of $h_{\alpha_i + k\delta}$ for all $i \in [n], k \in \ZZ$.
\item ($w(\alpha_i+k\delta) \in R_+$ and $\alpha_i + k\delta \in R_+$) or ($w(\alpha_i + k\delta) \in R_-$ and $\alpha_i + k\delta \in R_-$) for all $i \in [n], k \in \ZZ$.
\item ($w(\alpha_i)+k\delta \in R_+$ and $k \geq 0$) or ($w(\alpha_i) + k\delta \in R_-$ and $k < 0$) for all $i \in [n], k \in \ZZ$.
\item $w(\alpha_i) \in R_+$ and $w(\alpha_i) - \delta \in R_-$ for all $i \in [n]$.
\item $w(\alpha_i) \in (R_- + \delta) \cap R_+$ for all $i \in [n]$.
\end{list}

To see (iii) implies (ii), for any $i\in [n]$, compute
\[ vy^{\lambda}w_0(\alpha_i) = vy^{\lambda}(-\alpha_j)=v(-\alpha_j-\langle\lambda,-\alpha_j\rangle\delta)=v(-\alpha_j)+
\left\{\begin{array}{ll}
\delta & \text{if }j\not\in J,\\
0 & \text{if }j\in J
\end{array},\right. \]
where $j = d(i)$ (with $d$ as in \textsection\ref{ss w0} so that $w_0(\alpha_i) = -\alpha_j$). The condition $v(-\alpha_j)\in \fcoroots_+$ is equivalent to $j\in R(v) = J$, hence $vy^{\lambda}w_0(\alpha_i)\in (\fcoroots_-+\delta) \cup \fcoroots_+$.

Now assume $w$ satisfies (ii). Put $J = \{s_j : ww_0(-\alpha_j)\in \fcoroots_+\}$ and define $\lambda := \sum_{s_i\in S_f\backslash J} \varpi_i$. Then, define $v := ww_0y^{-\lambda}$ and compute
\[ ww_0y^{-\lambda}(-\alpha_j) = ww_0(-\alpha_j-\langle-\lambda,-\alpha_j\rangle\delta) = ww_0(-\alpha_j)-\left\{
\begin{array}{ll}
\delta & \text{if }j\not\in J,\\
0 & \text{if }j\in J
\end{array}
\right. \]
where, as above, $i = d(j)$. By the assumption (ii) and definition of $J$, $v(-\alpha_j)\in \fcoroots$ for $j\in [n]$. Writing $v = uy^\mu$, $u \in W_f$, $\mu \in Y$, and using that the fundamental weights form a basis for $Y$, we may conclude that $\mu = 0$, i.e., $v \in W_f$. Also, $R(v) = \{s_j : v(-\alpha_j)\in \fcoroots_+\} = J$, so $w= vy^{\lambda}w_0$ with the conditions in (iii) satisfied.
\end{proof}

\begin{definition}
A $w \in W_e$ satisfying any (all) of the preceding conditions is called \emph{primitive}.
\end{definition}


\begin{proposition}
For $G = SL_n$, $x \in W_e$ is primitive if and only if  $\ 1 \leq x_{i+1}-x_i\leq n$ for $i=1,\ldots,n-1$, where $x_1\ x_2\dots x_{n-1} \ x_n$ is the word of $x$ (see \textsection\ref{ss type A}).
\end{proposition}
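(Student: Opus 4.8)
The plan is to prove this by translating the abstract characterization of primitivity, condition (ii) of Proposition~\ref{primitive proposition}, into the explicit combinatorics of the word of $x$ in the $GL_n$ (equivalently $SL_n$) permutation model. Recall from \textsection\ref{ss type A} that an element $x \in W_e$ for $GL_n$ is a periodic permutation $x: \ZZ \to \ZZ$ with $x(i+n) = x(i)+n$, recorded by its word $x_1 x_2 \cdots x_n$ with $x_i = x(i)$. The key is to compute, in these coordinates, what $x(\alpha_i)$ is for each affine simple root $\alpha_i$, $i \in [n]$, and then read off when $x(\alpha_i)$ lands in $(\fcoroots_- + \delta) \cup \fcoroots_+$.

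First I would set up the root data concretely. For $i \in [n-1]$ the simple root is $\alpha_i = \alpha'^\vee_i = \epsilon^\vee_i - \epsilon^\vee_{i+1}$ (a finite coroot), while the affine simple root is $\alpha_0 = \delta - \theta$, where $\theta = \phi'^\vee = \epsilon^\vee_1 - \epsilon^\vee_n$ is the highest coroot. The roots of the affine system are $R = \fcoroots + \ZZ\delta$, so every element of $R$ has the form $(\epsilon^\vee_a - \epsilon^\vee_b) + k\delta$ for $a \neq b$ and $k \in \ZZ$. I would make explicit how $x$ acts: since $x$ permutes the basis coroots according to its periodic permutation and $\delta$ is fixed (as $\langle \delta, \cdot\rangle = 0$ forces $x(\delta) = \delta$), the action on a root $(\epsilon^\vee_a - \epsilon^\vee_b)$ is governed by the integers $x(a), x(b)$, reduced into $[n]$ with a compensating multiple of $\delta$ coming from the $x(i+n) = x(i) + n$ periodicity. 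The positivity convention $R_+ = (\fcoroots + \ZZ_{>0}\delta) \cup \fcoroots_+$ tells me a root is positive exactly when its $\delta$-coefficient is positive, or the $\delta$-coefficient is zero and the finite part is a positive coroot (i.e.\ of the form $\epsilon^\vee_a - \epsilon^\vee_b$ with $a < b$).

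The heart of the argument is then to identify the target set. The condition $x(\alpha_i) \in (\fcoroots_- + \delta) \cup \fcoroots_+$ says $x(\alpha_i)$ is either a positive finite coroot (zero $\delta$-coefficient, positive finite part) or a negative finite coroot plus exactly one $\delta$. I would compute $x(\alpha_i) = x(\epsilon^\vee_i) - x(\epsilon^\vee_{i+1})$ for $i \in [n-1]$ and $x(\alpha_0) = \delta - x(\theta) = \delta - x(\epsilon^\vee_1) + x(\epsilon^\vee_n)$, expressing each in terms of the word entries. Writing each $x(\epsilon^\vee_j)$ in the form $(x_j - \rsd{x_j})\,\tfrac{1}{n}(\text{shift})$ is awkward; instead the clean bookkeeping is that the difference $x_{i+1} - x_i$, a genuine integer, directly records how many multiples of $\delta$ appear. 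Unwinding the definitions, membership of $x(\alpha_i)$ in $(\fcoroots_- + \delta) \cup \fcoroots_+$ turns out to be equivalent to the single inequality $1 \le x_{i+1} - x_i \le n$ for each $i \in [n-1]$; the two cases $x(\alpha_i) \in \fcoroots_+$ versus $x(\alpha_i) \in \fcoroots_- + \delta$ correspond to the sub-ranges where $x_{i+1} - x_i$ reduces to a positive finite coroot with no $\delta$ versus one $\delta$. The affine node $i = 0$ must be handled separately using the periodicity $x_{n+1} = x_1 + n$, and one checks that the resulting condition on $\alpha_0$ is automatically implied by (or redundant with) the $n-1$ finite-node conditions, so it does not contribute an extra constraint.

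The main obstacle I anticipate is the careful bookkeeping of the $\delta$-shifts: translating $x(\epsilon^\vee_j)$ into a finite coroot part plus an integer multiple of $\delta$ requires tracking the reduction of $x_j$ modulo $n$ together with the sign of $x_{i+1} - x_i$, and it is easy to be off by one or to mishandle the boundary cases $x_{i+1} - x_i = 1$ and $x_{i+1} - x_i = n$ where the root is exactly on the edge between the two pieces of the target set. I would therefore treat these boundary values explicitly, verifying that both endpoints are genuinely included (the inequalities are non-strict precisely because $\fcoroots_+$ contributes the $x_{i+1} - x_i = n$ end and $\fcoroots_- + \delta$ contributes the $x_{i+1} - x_i = 1$ end, after accounting for the shift). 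Once the finite nodes are settled, closing the argument is a matter of confirming the affine-node condition imposes nothing new, which follows because the constraints on consecutive differences already pin down the word up to the global translation structure that the $\pi$-part of $W_e$ absorbs.
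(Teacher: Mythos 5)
Your core computation for the finite nodes---translating condition (ii) of Proposition~\ref{primitive proposition} into the inequalities $1 \le x_{i+1}-x_i \le n$---is sound and is essentially the paper's argument; the paper just packages the bookkeeping you worry about into the linear functional $\tau(\alpha) = \langle \alpha, \rho + n\Lambda^\vee\rangle$, which maps $(\fcoroots_-+\delta)\cup\fcoroots_+$ exactly onto $[n]$ and satisfies $\tau(x(\alpha_i)) = x_{i+1}-x_i$, so no case analysis on residues or on boundary values is needed. (Incidentally, your attribution of the endpoints to the two pieces of the target set is off: which piece $x(\alpha_i)$ lands in is governed by whether the residues $\rsd{x}_i, \rsd{x}_{i+1}$ wrap around $n$, not by the value of $x_{i+1}-x_i$; and the value $x_{i+1}-x_i = n$ never occurs for a permutation with $n\ge 2$, since $x(i+1)=x(i)+n=x(i+n)$ would force $i+1=i+n$.)

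The genuine gap is in how you close the argument. Condition (ii) of Proposition~\ref{primitive proposition} quantifies only over the \emph{finite} simple roots $\alpha_i$, $i \in [n]$ in the general indexing where the finite system has rank $n$; for $SL_n$ these are the $n-1$ nodes $i = 1,\dots,n-1$, and $\alpha_0$ is \emph{not} included. (This is also visible in condition (i): a box is a component of $H$ minus only the hyperplanes $h_{\alpha_i + k\delta}$ with $i \in [n]$, so the hyperplanes $h_{\theta + k\delta}$ play no role.) So there is no affine-node condition to handle. Worse, the claim you use to discharge it---that $1 \le x_{n+1}-x_n \le n$ (equivalently $1 \le x_1 + n - x_n \le n$) follows from the $n-1$ finite-node inequalities---is false. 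Computing with $\tau$ gives $\tau(x(\alpha_0)) = n + x_1 - x_n$, and this can lie outside $[n]$ for a primitive element: for $SL_4$ take $x = \pi^6 s_2 s_1 s_3 s_0$ with word $4\ 7\ 10\ 13$, which appears in the paper's own list of primitive elements; its consecutive differences are $3,3,3 \in [1,4]$, yet $x_1 + 4 - x_4 = -5$. Thus if primitivity really required the $\alpha_0$ condition, as your reading assumes, the proposition you are proving would be false. The repair is simply to drop $\alpha_0$ from consideration entirely, after which your finite-node computation (made rigorous, e.g., via the functional $\tau$) suffices.
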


\begin{proof}
The word of $x$ and $x$ as an automorphism of $X$ are related by
\[x(\epsilon^\vee_i) = \epsilon^\vee_{\rsd{x}_i}+\left(\frac{\rsd{x}_i-x_i}{n}\right)\delta.\]
Define  a function $\tau: R\to \ZZ$ by $\alpha \mapsto\langle\alpha,\rho+n\Lambda^{\vee}\rangle$, where  $\Lambda^\vee$ is the generator of  $\ZZ$ in  $X^\vee = Y \oplus \ZZ$ satisfying  $\langle\delta,\Lambda^\vee\rangle =1$. This function takes $\alpha_i$ to $1$ for $i\in [n]$. The inverse image of $[n]$ under this map is $(\fcoroots_-+\delta) \cup \fcoroots_+$. Then \[\tau(x(\epsilon^\vee_i)) = n-\rsd{x}_i + \frac{\rsd{x}_i - x_i}{n}\ n = n-x_i,\] so $\tau(x(\alpha_i)) = x_{i+1} - x_i$ is in $[n]$ if and only if $x(\alpha_i)\in (\fcoroots_-+\delta) \cup \fcoroots_+$.
\end{proof}

\begin{example}
For $G=SL_4$, the primitive elements of $W_e$, expressed as products of simple reflections (top lines) and words (bottom lines), are
{\linespread{1.8}
\Large
\[\begin{array}{c}
\substack{\idelm\\1\ 2\ 3\ 4}\\
\substack{\pi s_1s_0\\\ 1\ 2\ 4\ 7} \quad \substack{\pi s_0\\1\ 3\ 4\ 6} \quad \substack{\pi\\2\ 3\ 4\ 5}\\
\substack{\pi^2s_1s_3s_0\\1\ 3\ 6\ 8} \quad \substack{\pi^2s_3s_0\\1\ 3\ 5\ 7} \quad \substack{\pi^2s_1s_0\\2\ 3\ 5\ 8} \quad \substack{\pi^2s_0\\2\ 4\ 5\ 7} \quad \substack{\pi^2\\3\ 4\ 5\ 6}\\
\substack{\pi^3s_2s_1s_3s_0\\1\ 4\ 7\ 10} \quad \substack{\pi^3s_1s_3s_0\\2\ 4\ 7\ 9} \quad \substack{\pi^3s_1s_0\\3\ 4\ 6\ 9} \quad \substack{\pi^3s_3s_0\\2\ 5\ 7\ 8} \quad \substack{\pi^3s_0\\3\ 5\ 6\ 8} \quad \substack{\pi^3\\4\ 5\ 6\ 7}\\
\substack{\pi^4s_1s_3s_0\\3\ 5\ 8\ 10} \quad \substack{\pi^4s_2s_1s_3s_0\\2\ 6\ 8\ 11} \quad \substack{\pi^4s_3s_0\\3\ 6\ 8\ 9} \quad \substack{\pi^4s_1s_0\\4\ 5\ 7\ 10} \quad \substack{\pi^4s_0\\4\ 6\ 7\ 9}\\
\substack{\pi^5s_2s_1s_3s_0\\3\ 6\ 9\ 12} \quad \substack{\pi^5s_1s_3s_0\\4\ 6\ 9\ 11} \quad \substack{\pi^5s_3s_0\\4\ 7\ 9\ 10}\\
\substack{\pi^6s_2s_1s_3s_0\\4\ 7\ 10\ 13}
\end{array}\]}
\end{example}

\subsection{}
Let us now establish some properties of primitive elements. The first two are basic and the third is more substantial.
\begin{proposition}\label{pi mult prop}
For any $\pi\in\Pi$, $w\in W_e$ is primitive if and only if $\pi w$ is.
\end{proposition}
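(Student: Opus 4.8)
The plan is to prove this via the purely geometric characterization (i) of Proposition \ref{primitive proposition}, which states that $w \in W_e$ is primitive precisely when $w^{-1}(\alcove) \subseteq \boxo$. The decisive fact, already recorded in \textsection\ref{ss extended affine}, is that the basic alcove $\alcove$ is a fundamental domain for $W_a$ whose stabilizer under the $W_e$-action on $H$ is exactly $\Pi$. In other words, every $\pi \in \Pi$ fixes $\alcove$ setwise, so $\pi(\alcove) = \alcove$ and likewise $\pi^{-1}(\alcove) = \alcove$.

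Granting this, the argument is a one-line computation. Fix $\pi \in \Pi$ and $w \in W_e$, and note that $\pi^{-1} \in \Pi$ also stabilizes $\alcove$. Then
\[
(\pi w)^{-1}(\alcove) = w^{-1}\pi^{-1}(\alcove) = w^{-1}(\alcove).
\]
Thus the subsets of $H$ produced by $w^{-1}$ and by $(\pi w)^{-1}$ applied to $\alcove$ coincide, so $w^{-1}(\alcove) \subseteq \boxo$ holds if and only if $(\pi w)^{-1}(\alcove) \subseteq \boxo$ does. By characterization (i) this says exactly that $w$ is primitive if and only if $\pi w$ is, which is the assertion.

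I would deliberately avoid the algebraic characterization (ii), which involves the condition $w(\alpha_i) \in (\fcoroots_-+\delta) \cup \fcoroots_+$: passing from $w$ to $\pi w$ replaces $w(\alpha_i)$ by $\pi(w(\alpha_i))$, and to run the argument this way one must check that the automorphism $\pi$ preserves the set $(\fcoroots_-+\delta) \cup \fcoroots_+$, which is less transparent than simply using that $\pi$ fixes $\alcove$. I do not expect any genuine obstacle here: the only point requiring care is the correct bookkeeping of the inverse, namely that left multiplication by $\pi$ contributes the factor $\pi^{-1}$ acting \emph{first} on $\alcove$, where it acts trivially. Everything else is immediate from the established description of $\Pi$ as the alcove stabilizer.
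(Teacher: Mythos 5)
Your proof is correct, and it takes a genuinely different route from the paper's. The paper argues through characterization (ii) of Proposition \ref{primitive proposition}: $w$ is primitive iff $w(\alpha_i)\in R_+$ and $w(\alpha_i-\delta)\in R_-$ for all $i\in[n]$, and since $\pi(R_+)=R_+$ and $\pi(R_-)=R_-$ (elements of $\Pi$ have length zero, hence permute the positive roots), both conditions transfer from $w$ to $\pi w$; the converse is the same argument run with $\pi^{-1}$ in place of $\pi$ and $\pi w$ in place of $w$. You instead use characterization (i) ($w^{-1}(\alcove)\subseteq\boxo$) together with the fact, stated explicitly in \textsection\ref{ss extended affine}, that $\Pi$ is exactly the stabilizer of $\alcove$ for the $W_e$-action on $H$, so that $(\pi w)^{-1}(\alcove)=w^{-1}\pi^{-1}(\alcove)=w^{-1}(\alcove)$. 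Your bookkeeping of the inverse is right, and your version is arguably tidier: the two sets are literally equal, so both directions of the equivalence come at once, with no separate argument needed for the converse. The underlying facts are essentially equivalent --- $\pi$ stabilizes $\alcove$ precisely because it permutes the walls of $\alcove$, which is also why it preserves $R_+$ and $R_-$ --- so neither proof is deeper. What the paper's choice buys is that it stays in the root-level language of condition (ii), the form of primitivity actually invoked where the proposition gets used (e.g.\ in Proposition \ref{key_factorization_proposition} and in step (A) of the main proof); what yours buys is brevity and reliance only on a fact quoted verbatim in the text.
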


\begin{proof}
Given any $i\in[n]$, $w$ primitive implies $w(\alpha_i)\in R_+$ and $w(\alpha_i-\delta)\in R_-$.  Since $\pi(R_+)=R_+$ (and $\pi(R_-)=R_-$), $\pi w(\alpha_i)\in R_+$ and $\pi w(\alpha_i-\delta)\in R_-$.  In other words, $\pi w(\alpha_i) \in (R_- +\delta) \cap R_+$, so $\pi w$ is primitive. The same argument with $\pi^{-1}$ in place of $\pi$ and $\pi w$ in place of $w$ gives the other direction.
\end{proof}

\begin{proposition}
\label{partial primitive prop}
If $x\cdot y$ is a reduced factorization of a primitive element, then $y$ is primitive.
\end{proposition}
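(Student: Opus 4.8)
The plan is to verify condition (ii) of Proposition \ref{primitive proposition} for $y$ directly, using a single structural input about reduced factorizations: the inversion set $N(u) := \{\beta \in R_+ : u(\beta) \in R_-\}$ (so that $\ell(u) = |N(u)|$) is monotone on the right, i.e. if $u = a \cdot b$ is a reduced factorization then $N(b) \subseteq N(u)$. This is the usual Coxeter-group statement that the inversions of the right factor persist; in $W_e$ it holds because both $\ell$ and $N$ are unchanged by the length-zero factor $\Pi$ (which permutes $R_+$), so one may either reduce to $W_a$ or argue directly by partitioning $R_+$ according to the sign of $b(\beta)$ and comparing cardinalities via $|N(ab)| = |N(a)| + |N(b)|$. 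I would record this as a one-line lemma.

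Write $w = x \cdot y$ with $w$ primitive. By Proposition \ref{primitive proposition}(ii), for each $i \in [n]$ we have both $w(\alpha_i) \in R_+$ and $w(\alpha_i) - \delta \in R_-$, and I must establish these same two relations for $y$. Each will be an instance of inversion-monotonicity applied to a well-chosen positive root. First I note that $\delta$ is fixed by all of $W_e$ (the action preserves the level-$1$ plane $H$), and that $\delta - \alpha_i$ is a positive root, since its finite part $-\alpha_i \in \fcoroots$ and its $\delta$-coefficient is $1$, so $\delta - \alpha_i \in \fcoroots + \ZZ_{>0}\delta \subseteq R_+$.

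Now take the two test roots $\alpha_i$ and $\delta - \alpha_i$, both positive. If $y(\alpha_i) \in R_-$, then $\alpha_i \in N(y) \subseteq N(w)$, forcing $w(\alpha_i) \in R_-$ and contradicting primitivity of $w$; hence $y(\alpha_i) \in R_+$. Similarly, if $y(\delta - \alpha_i) = \delta - y(\alpha_i) \in R_-$, then $\delta - \alpha_i \in N(y) \subseteq N(w)$, so $w(\delta - \alpha_i) = \delta - w(\alpha_i) \in R_-$, i.e. $w(\alpha_i) - \delta \in R_+$, again contradicting primitivity; hence $y(\alpha_i) - \delta \in R_-$. Both halves of condition (ii) thus hold for $y$, so $y$ is primitive.

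The whole argument hinges on the monotonicity $N(y) \subseteq N(w)$, and the only genuine choice is which roots to test against it; the clean point is that condition (ii), unpacked as $w(\alpha_i) \in R_+$ together with $w(\alpha_i) - \delta \in R_-$, is exactly the pair of statements $w(\alpha_i) \in R_+$ and $w(\delta - \alpha_i) \in R_+$, so both become the same assertion about inversions. The main obstacle is therefore not conceptual but twofold: to be sure the inversion-monotonicity is valid in the extended group $W_e$, and to handle $\delta - \alpha_i$ correctly as a positive root with $y(\delta - \alpha_i) = \delta - y(\alpha_i)$. Geometrically the proof is transparent and could instead be phrased through condition (i): the reduced factorization yields a minimal gallery from $\alcove$ to $w^{-1}(\alcove)$ passing through $y^{-1}(\alcove)$, so the hyperplanes separating $\alcove$ from $y^{-1}(\alcove)$ form a subset of those separating it from $w^{-1}(\alcove)$; since none of the latter is of the form $h_{\alpha_i + k\delta}$ (this is the meaning of $w^{-1}(\alcove) \subseteq \boxo$), the same holds for $y^{-1}(\alcove)$, giving $y^{-1}(\alcove) \subseteq \boxo$. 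I would present the inversion-set version as the formal proof and keep the gallery picture as guiding intuition.
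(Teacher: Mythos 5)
Your proof is correct and takes essentially the same approach as the paper: your inversion-monotonicity statement $N(y)\subseteq N(xy)$ for reduced factorizations is precisely the paper's Lemma \ref{reducedlemma} in restated form, and you apply it to the same two test roots $\alpha_i$ and $\delta-\alpha_i$ to verify condition (ii) of Proposition \ref{primitive proposition}. The only cosmetic difference is that the paper phrases the second half as a direct contrapositive ($x\cdot y$ reduced, $\alpha_i-\delta\in R_-$, and $xy(\alpha_i-\delta)\in R_-$ force $y(\alpha_i-\delta)\in R_-$) rather than naming the positive root $\delta-\alpha_i$ explicitly.
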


\begin{proof}
For $i \in [n]$, $y(\alpha_i)\in R_{+}$ because if not, $y(\alpha_i)\in R_{-}$ and $x\cdot y$ reduced implies by Lemma \ref{reducedlemma} that $xy(\alpha_i)\in R_-$, contradiction. Also, $x\cdot y$ reduced, $\alpha_i-\delta\in R_-$, and $xy(\alpha_i-\delta)\in R_-$ implies $y(\alpha_i-\delta)\in R_-$. Thus $y(\alpha_i)\in (R_- +\delta) \cap R_+$ shows $y$ is primitive.
\end{proof}

The \emph{lowest two-sided cell} of $W_e$ is the set $W_{(\nu)} = \{w \in W_e: w = x \cdot w_0 \cdot z, \text{ for some } x,z \in W_e\}$ (see \cite{X, X2}).

\begin{proposition}
\label{p two-sided primitive}
For any $w \in W_{(\nu)}$, there exists a unique expression for $w$ of the form
\be w = v_1 \cdot w_0 y^\lambda \cdot v_2 \ee
where $v_1, v_2^{-1}$ are primitive and $\lambda \in Y_+$.
\end{proposition}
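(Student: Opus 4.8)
The plan is to show that the map $(v_1,\lambda,v_2)\mapsto v_1\cdot w_0y^\lambda\cdot v_2$ is a bijection from the set of triples with $v_1,v_2^{-1}$ primitive and $\lambda\in Y_+$ onto $W_{(\nu)}$, establishing both that each such product is a reduced factorization lying in $W_{(\nu)}$ and that every $w\in W_{(\nu)}$ arises uniquely this way. A useful first observation is that the problem is symmetric under $w\mapsto w^{-1}$: if $w=v_1\cdot w_0y^\lambda\cdot v_2$ then $w^{-1}=v_2^{-1}\cdot w_0y^{-w_0(\lambda)}\cdot v_1^{-1}$, again of the required shape (with $-w_0(\lambda)\in Y_+$), so anything proved for left factors transfers to right factors. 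I would also record the identity $v_1\cdot w_0y^\lambda=a\,y^{\mu+\lambda}$, where $v_1=a\,y^{\mu}w_0$ is the form (iii) of Proposition \ref{primitive proposition}; this collapses the central $w_0$ against the $w_0$ hidden inside the primitive element, and will let me pass between the target form and the canonical form (\ref{3factors eq}).

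For the direction that the product lies in $W_{(\nu)}$, I would first check that $w_0\cdot y^\lambda$ is a reduced factorization for $\lambda\in Y_+$, so that $w_0$ occurs as a reduced factor and $w_0y^\lambda\in W_{(\nu)}$ by definition. I would then show that the outer multiplications are length-additive, $\ell(v_1\cdot w_0y^\lambda\cdot v_2)=\ell(v_1)+\ell(w_0y^\lambda)+\ell(v_2)$, which keeps $w_0$ a reduced factor of the whole product and places it in $W_{(\nu)}$. Length-additivity is exactly where primitivity enters, through the geometric description: the conditions $v_1^{-1}(\alcove)\subseteq\boxo$ and the analogous one for $v_2^{-1}$ force the hyperplanes crossed by the three pieces to be disjoint, which I would phrase via (\ref{e hyperplane separate}) and the inversion-set criterion for reduced factorizations (Lemma \ref{reducedlemma}).

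For existence, given $w=x\cdot w_0\cdot z\in W_{(\nu)}$, I would extract a \emph{maximal} primitive left factor: let $v_1$ be a primitive element of greatest length admitting a reduced factorization $w=v_1\cdot w'$ (the identity is primitive, so one exists), and by the inverse symmetry extract on the right a maximal $v_2$ with $v_2^{-1}$ primitive and $w=w''\cdot v_2$ reduced. The heart of the argument is the sub-lemma that the remaining middle factor $m$ — an element of $W_{(\nu)}$ admitting no nontrivial primitive left factor and no nontrivial right factor with primitive inverse — must equal $w_0y^\lambda$ for some $\lambda\in Y_+$. To prove this I would use that $m$ still carries a reduced occurrence of $w_0$, analyze its left and right descent sets against the box geometry of Proposition \ref{primitive proposition}(i),(ii), and invoke Proposition \ref{partial primitive prop} (right factors of primitive elements are primitive) to rule out any residual peeling; the restriction $\lambda\in Y_+$ rather than $Y_{++}$ reflects that $\lambda$ may lie on a wall, matching the wall structure of the $\bx{\lambda}$.

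For uniqueness I would argue that the $v_1$ produced above is in fact the \emph{unique} maximal primitive left factor and $v_2$ the unique maximal right factor with primitive inverse, so the outer pieces are determined by $w$ and then $w_0y^\lambda=w_0y^{\lambda'}$ forces $\lambda=\lambda'$. Equivalently, using the identity $v_1\cdot w_0y^\lambda=a\,y^{\mu+\lambda}$ together with the bijection between primitive elements and $W_f$ from Proposition \ref{primitive proposition}(iii), I can match the triple $(v_1,\lambda,v_2)$ to the canonical form (\ref{3factors eq}) of $w$ and invoke its uniqueness. I expect the main obstacle to be the existence sub-lemma: proving that stripping the maximal primitive factors from both ends of $w$ leaves \emph{exactly} $w_0y^\lambda$ with the correct $\lambda\in Y_+$, while keeping all three factorizations simultaneously reduced. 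This is precisely where the cell condition $w\in W_{(\nu)}$ (a reduced word through $w_0$) must be converted into the box-geometric statement of primitivity, and where most of the careful length bookkeeping will reside.
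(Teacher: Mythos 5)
Your preparatory observations are sound: the inversion symmetry, the collapse identity $v_1\cdot w_0y^\lambda = a\,y^{\mu+\lambda}$, and the idea of deducing uniqueness from the uniqueness of the canonical form (\ref{3factors eq}) are all correct (the last is essentially the paper's own uniqueness argument, phrased algebraically instead of via the chambers $u_1(\chamberf)$ containing $w(\alcove)$). The genuine gap is in existence, exactly where you predicted the main obstacle would lie, and the difficulty is worse than a missing computation: the extraction scheme itself fails. Every element of $\Pi$ is primitive (apply Proposition \ref{pi mult prop} to $w=\idelm$, which satisfies condition (ii) of Proposition \ref{primitive proposition}), and elements of $\Pi$ have length $0$, so for any $w$ and any $\pi\in\Pi$ the factorization $w=\pi\cdot(\pi^{-1}w)$ is automatically length-additive. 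Consequently a ``primitive left factor of greatest length'' is never unique when $\Pi\neq 1$ (and $\Pi\cong Y/Q'_f$ is nontrivial for $G$ simply connected, e.g.\ $\ZZ/n$ for $SL_n$), and the hypothesis of your sub-lemma --- ``no nontrivial primitive left factor'' --- is satisfied by no element at all. A concrete failure: for $G=SL_2$ take $w=w_0=s_1\in W_{(\nu)}$. Both $\idelm$ and $\pi$ are primitive left factors of maximal (zero) length. Choosing $\pi$ leaves $\pi^{-1}s_1=y^{\varpi_1}$, and then choosing the right factor $v_2=\pi^{-1}$ (whose inverse is primitive, again of maximal length) leaves the middle $y^{\varpi_1}\pi=s_0$, which is not of the form $w_0y^\lambda$ with $\lambda\in Y_+$: indeed $s_0=y^{2\varpi_1}s_1$ whereas $w_0y^\lambda=y^{-\lambda}s_1$. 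So with legitimate maximal-length choices your middle factor violates the conclusion of the sub-lemma, and nothing in your prescription rules those choices out.

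What repairs this is to make the outer factors canonical by geometry rather than by maximality --- and this is the paper's proof. From $w=x\cdot w_0\cdot z$, reducedness of $w_0\cdot z$ forces $z(\alcove)\subseteq\chamberf$; since the boxes $\bx{\eta}$, $\eta\in Y_+$, tile $\chamberf$, we get $z(\alcove)\subseteq\bx{\eta}$ for a unique $\eta$, whence $z=y^\eta v_2$ with $v_2^{-1}$ primitive by Proposition \ref{primitive proposition}(i); symmetrically $x=v_1y^{-\mu}$ with $v_1$ primitive, and then $y^{-\mu}w_0y^{\eta}=w_0y^{w_0(-\mu)+\eta}$ gives $\lambda=w_0(-\mu)+\eta\in Y_+$. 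The box containing the alcove pins down the factors (in particular it resolves the $\Pi$-ambiguity automatically), no stripping or termination argument is needed, and reducedness of $v_1\cdot w_0y^\lambda\cdot v_2$ follows from Proposition \ref{key_factorization_proposition}, whose first assertion does not even require primitivity. I suggest you replace your maximal-factor extraction with this construction; your uniqueness paragraph can then stand essentially unchanged.
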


\begin{proof}
Write $w = x \cdot w_0 \cdot z$. By (\ref{3factors eq}), $z(\alcove) \subseteq \chamberf$ so $z(\alcove) \subseteq \bx{\eta}$ for some $\eta \in Y_+$. Thus by Proposition \ref{primitive proposition} and the discussion preceding it, $z = y^\eta v_2$ for some primitive element $v_2^{-1}$. Similarly, $x = v_1 y^{-\mu}$ for $v_1$ primitive and $\mu \in Y_+$. Next, we can write $y^{-\mu} w_0 y^\eta = w_0 y^{w_0(-\mu)+\eta}$. Setting $\lambda = w_0(-\mu)+\eta$ and noting that $w_0(-\mu) \in Y_+$ yields the desired expression $w = v_1 w_0 y^\lambda v_2$, with $v_1, v_2^{-1}$ primitive. To see that this factorization is reduced, use that $v_1 \cdot w_0$ and $w_0 \cdot y^\lambda v_2$ are reduced and Proposition \ref{key_factorization_proposition} (see \textsection\ref{ss factorthm 1} below) to conclude that $v_1 \cdot w_0 y^\lambda v_2$ is reduced. Similarly, by rewriting $w_0 y^\lambda = y^{w_0(\lambda)}w_0$ we may conclude $v_1 y^{w_0(\lambda)} w_0 \cdot v_2$ is reduced.

For uniqueness, suppose that $v_1 \cdot w_0y^\lambda \cdot v_2 = w = v'_1 \cdot w_0y^{\lambda'} \cdot v'_2$ for $v_1, v'_1, v_2^{-1}, v_2'^{-1}$ primitive and $\lambda, \lambda' \in Y_+$. Put $v_1 w_0 = u_1 y^\mu$, $v_1' w_0 = u_1' y^{\mu'}$ with $u_1, u_1' \in W_f$, $\mu, \mu' \in Y^+$ as in Proposition \ref{primitive proposition} (iii). Then $w(\alcove) = u_1 y^{\mu+\lambda} v_2(\alcove) \subseteq u_1(\chamberf)$ and also $u_1'(\alcove) = u_1'y^{\mu' + \lambda'} v_2'(\alcove) \subseteq u_1'(\chamberf)$. Thus $u_1 = u_1'$, implying $v_1 = v_1'$. Similarly, $v_2 = v_2'$ and then $\lambda = \lambda'$ follows easily.
\end{proof}

\section{The factorization theorem}\label{section Fact Theorem}
\newcommand{\lC}{\ensuremath{\overleftarrow{C}^{\prime}}}
\newcommand{\rC}{\ensuremath{\overrightarrow{C}^\prime}}
\newcommand{\lP}{\ensuremath{\overleftarrow{P}}}
\newcommand{\rP}{\ensuremath{\overrightarrow{P}}}
\newcommand{\lzy}{y^{\lambda}}
\subsection{}
\label{ss factorthm 1}
This section is devoted to a proof of our main theorem, which we now state. Suppose $v \in W_e$ such that $v \cdot w_0$ is reduced. It is well-known that
\[ \C_{v w_0} = \sum_x \lP_{x,v} T_x \C_{w_0}, \]
where the sum is over $x \leq v$ such that $x \cdot w_0$ is reduced and $\lP_{x,v} := P'_{x w_0, v w_0}$ (see \cite{B0} for the more general construction of which this is a special case).
Define $\lC_v = \sum_x \lP_{x,v} T_x$, with the sum over the same $x$ as above. Similarly, for $v$ such that $w_0 \cdot v$ is reduced, define $\rP_{x,v} = P'_{w_0x,w_0v}$ and $\rC_{v} = \sum_x \rP_{x,v} T_x$.

\begin{theorem}\label{main theorem}
Let $v_1,\; v_2 \in W_e$ such that $v_1 \cdot w_0$ and $w_0 \cdot v_2$ are reduced factorizations.  If $v_1$ is primitive, then $\C_{v_1 w_0 v_2} = \lC_{v_1} \C_{w_0} \rC_{v_2} $.
\end{theorem}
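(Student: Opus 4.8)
The plan is to identify $X := \lC_{v_1}\C_{w_0}\rC_{v_2}$ with $\C_{v_1 w_0 v_2}$ by checking the two conditions from the Kazhdan–Lusztig theorem: that $X$ is bar-invariant and that $X \equiv T_{v_1 w_0 v_2} \pmod{\ui\L}$. First I would record that $v_1 w_0 v_2$ is reduced (this follows from $v_1 \cdot w_0$, $w_0 \cdot v_2$ reduced and $v_1$ primitive, via Proposition~\ref{key_factorization_proposition}), so $T_{v_1 w_0}T_{v_2} = T_{v_1 w_0 v_2}$ and the target group element is unambiguous. I would also note the auxiliary fact that the only bar-invariant element of $\ui\L$ is $0$ (expand in the $\C$-basis, where bar-invariance means $\ZZ[q+q^{-1}]$-coefficients, and peel off terms by descending length). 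Granting this, the two conditions force $X = \C_{v_1 w_0 v_2}$.

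Bar-invariance is the easy half. Since $\br{\cdot}$ is a ring homomorphism with $\br{\C_{w_0}} = \C_{w_0}$, and since $\C_{v_1 w_0} = \lC_{v_1}\C_{w_0}$ and $\C_{w_0 v_2} = \C_{w_0}\rC_{v_2}$ are canonical basis elements (hence bar-invariant), I get $\br{\lC_{v_1}}\,\C_{w_0} = \br{\lC_{v_1}\C_{w_0}} = \C_{v_1 w_0} = \lC_{v_1}\C_{w_0}$ and likewise $\C_{w_0}\,\br{\rC_{v_2}} = \C_{w_0}\rC_{v_2}$. Therefore
\[ \br{X} = \br{\lC_{v_1}}\,\C_{w_0}\,\br{\rC_{v_2}} = \bigl(\lC_{v_1}\C_{w_0}\bigr)\br{\rC_{v_2}} = \lC_{v_1}\bigl(\C_{w_0}\br{\rC_{v_2}}\bigr) = \lC_{v_1}\C_{w_0}\rC_{v_2} = X. \]
Note that this step uses only the two stated factorizations and not primitivity directly.

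The congruence is the substantive half. Grouping $X = \C_{v_1 w_0}\rC_{v_2}$ and writing $\rC_{v_2} = T_{v_2} + \sum_{x < v_2}\rP_{x,v_2}T_x$ with $\rP_{x,v_2}\in\ui A^-$ and each $x$ a minimal-length representative of $W_f\backslash W_e$ (i.e.\ $w_0\cdot x$ reduced), the problem reduces to a single claim: for every such $x$ one has $\C_{v_1 w_0}T_x\in\L$, and moreover $\C_{v_1 w_0}T_{v_2}\equiv T_{v_1 w_0 v_2}\pmod{\ui\L}$. The base model $v_1 = e$ is transparent, since $u\cdot x$ is reduced for all $u\in W_f$ gives $\C_{w_0}T_x = \sum_{u\in W_f}\u^{\ell(u)-\ell(w_0)}T_{ux}$, which lies in $\L$ with leading term $T_{w_0 x}$. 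For general primitive $v_1$ I would expand $\C_{v_1 w_0} = \lC_{v_1}\C_{w_0}$, reducing the claim to the containment $T_y\,\C_{w_0}\,T_x\in\L$ for each $y$ in the support of $\lC_{v_1}$.

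This last containment is where I expect the real difficulty. The products $T_y\,\C_{w_0}\,T_x$ can a priori acquire positive powers of $\u$ from non-reduced multiplications, and these must cancel against the negative powers carried by the coefficients of $\C_{w_0}$ and $\lC_{v_1}$. The mechanism I anticipate is that the elements $y$ indexing $\lC_{v_1}$ are themselves primitive — an inheritance statement paralleling Proposition~\ref{partial primitive prop}, which would have to be proved — so that Proposition~\ref{key_factorization_proposition} makes $y\cdot w_0\cdot x$ reduced; a length-bookkeeping argument would then control the lower terms and confirm $\L$-membership with the correct leading term. Pinning down the primitivity of the indexing set and carrying out the cancellation of positive $\u$-powers is the main obstacle, and it is precisely here that the hypothesis that $v_1$ is primitive is used, beyond its role in guaranteeing that $v_1 w_0 v_2$ is reduced.
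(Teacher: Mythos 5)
Your overall frame---characterize $\C_{v_1w_0v_2}$ by bar-invariance together with the congruence $\equiv T_{v_1w_0v_2} \pmod{\ui\L}$, and get bar-invariance from $\C_{v_1w_0}=\lC_{v_1}\C_{w_0}$ and $\C_{w_0v_2}=\C_{w_0}\rC_{v_2}$---is exactly the paper's, and that half of your argument is correct. The gap is in the congruence half, and the specific mechanism you propose is based on a false statement: the elements indexing $\lC_{v_1}$ need \emph{not} be primitive, so no inheritance lemma paralleling Proposition~\ref{partial primitive prop} can be proved. Concretely, for $G=SL_4$ take $v_1=\pi^3s_2s_1s_3s_0$, which is primitive (word $1\ 4\ 7\ 10$). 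Deleting $s_3$ gives $y=\pi^3s_2s_1s_0\leq v_1$, whose word is $3\ 4\ 5\ 10$; its only right descent is $s_0$, so $y\cdot w_0$ is reduced, and since $y\cdot w_0$ is then a subword of a reduced word for $v_1w_0$ we get $yw_0\leq v_1w_0$, hence $\lP_{y,v_1}=P'_{yw_0,v_1w_0}\neq 0$ and $T_y$ genuinely occurs in $\lC_{v_1}$. But $y$ is not primitive: $x_4-x_3=10-5=5>4$ violates the word criterion. (Note also a misattribution: reducedness of $y\cdot w_0\cdot x$ is the \emph{first} part of Proposition~\ref{key_factorization_proposition} and needs no primitivity, only $y\cdot w_0$ and $w_0\cdot x$ reduced; primitivity enters in the second part, for middle factors of length $\ell(w_0)-1$.)

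Even setting this aside, the phrase ``a length-bookkeeping argument would then control the lower terms'' is a placeholder for what is in fact the entire content of the theorem. Proposition~\ref{key_factorization_proposition} controls only the top two layers of $\C_{w_0}=\sum_{u\in W_f}\u^{\ell(u)-\ell(w_0)}T_u$ (the terms $u=w_0$ and $\ell(u)=\ell(w_0)-1$); for shorter $u$ the products $T_{yu}T_x$ really do acquire positive powers of $\u$, one factor of $\xi=\u-\ui$ for each descent encountered, and when $\ell(x)$ is large the prefactor $\u^{\ell(u)-\ell(w_0)}$ gives no a priori reason these stay in $\L$---crude degree bounds on the structure constants are nowhere near sufficient. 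The paper's proof supplies exactly the two missing ideas: (i) when $z$ is \emph{large} with respect to $x$, Lemma~\ref{long_z_lemma} (resting on Lemmas~\ref{long z helper 1} and~\ref{long z helper 2}) shows that every descent producing a $\xi$ simultaneously pushes the middle $W_f$-factor strictly up in length, so each gained power of $\u$ is paid for by the prefactor $\uyw$ in an induction on $\ell(w_0)-\ell(y)+k$; and (ii) the positivity of the structure constants as polynomials in $\xi$ (Lemma~\ref{l structure coefficients}, steps (C)--(D)) reduces arbitrary $z$ to large $z$. Your outline contains neither the largeness device nor the positivity reduction, so it stops precisely where the proof has to begin.
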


This theorem together with Theorem \ref{t Lusztig} and Proposition \ref{p two-sided primitive} have the following powerful corollary for any $w$ in the lowest two-sided cell of $W_e$. This is phrased a little differently from the result stated in the abstract, but is equivalent to it.
\begin{corollary}\label{main corollary}
For $w \in W_{(\nu)}$ and with  $w = v_1 \cdot w_0 y^\lambda \cdot v_2$ as in Proposition \ref{p two-sided primitive}, we have the factorization
\be \C_{w} =  \chi_\lambda({\y}) \lC_{v_1} \C_{w_0} \rC_{v_2} .\ee
\end{corollary}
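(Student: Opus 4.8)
The plan is to combine Proposition \ref{p two-sided primitive}, Theorem \ref{main theorem}, and Theorem \ref{t Lusztig} in that order, the only genuine work being a mild extension of Lusztig's theorem from the element $w_0$ to $w_0 y^\lambda$. First I would invoke Proposition \ref{p two-sided primitive} to fix the reduced factorization $w = v_1 \cdot w_0 y^\lambda \cdot v_2$ with $v_1$ and $v_2^{-1}$ primitive and $\lambda \in Y_+$; its proof also shows that $v_1 \cdot w_0$, $w_0 \cdot y^\lambda v_2$, and $w_0 y^\lambda \cdot v_2$ are all reduced. Applying Theorem \ref{main theorem} with $v_2$ replaced by $y^\lambda v_2$ is legitimate, since that result requires only that the left factor $v_1$ be primitive, together with the reducedness of $v_1 \cdot w_0$ and $w_0 \cdot y^\lambda v_2$, all of which hold. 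It gives
\[ \C_w = \lC_{v_1}\C_{w_0}\rC_{y^\lambda v_2} = \lC_{v_1}\C_{w_0 y^\lambda v_2}, \]
the last equality being the right-handed analogue $\C_{w_0 u} = \C_{w_0}\rC_u$ (for $u = y^\lambda v_2$) of the relation recalled in \textsection\ref{ss factorthm 1}.

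The heart of the argument is then the identity
\[ \C_{w_0 y^\lambda v_2} = \chi_\lambda(\y)\,\C_{w_0 v_2}. \]
I would deduce this from the observation that $\C_{w_0 y^\lambda}$ obeys the same left-absorbing relations as $\C_{w_0}$. Indeed, using the form $\C_{w_0 y^\lambda} = \C_{w_0}\chi_\lambda(\y)$ of Theorem \ref{t Lusztig} together with $T_{s_i}\C_{w_0} = \u\,\C_{w_0}$ (valid for $i \in [n]$, since $s_i w_0 < w_0$), one computes $T_{s_i}\C_{w_0 y^\lambda} = T_{s_i}\C_{w_0}\chi_\lambda(\y) = \u\,\C_{w_0 y^\lambda}$ for every $i \in [n]$. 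The construction recalled in \textsection\ref{ss factorthm 1} (from \cite{B0}) builds the right factor $\rC_{v_2}$ out of exactly these absorbing relations for $\C_{w_0}$; since $\C_{w_0 y^\lambda}$ satisfies the identical relations and $w_0 y^\lambda \cdot v_2$ is reduced, the same construction yields $\C_{w_0 y^\lambda v_2} = \C_{w_0 y^\lambda}\rC_{v_2}$ with the \emph{same} $\rC_{v_2}$. Substituting $\C_{w_0 y^\lambda} = \chi_\lambda(\y)\C_{w_0}$ and $\C_{w_0}\rC_{v_2} = \C_{w_0 v_2}$ then gives the displayed identity.

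Finally I would assemble the pieces. Feeding the displayed identity into $\C_w = \lC_{v_1}\C_{w_0 y^\lambda v_2}$ gives $\C_w = \lC_{v_1}\chi_\lambda(\y)\C_{w_0}\rC_{v_2}$. Because $\chi_\lambda(\y) = \sum_\mu d_{\mu,\lambda}\y^\mu$ is $W_f$-symmetric (the weight multiplicities $d_{\mu,\lambda}$ being constant on $W_f$-orbits), it lies in the center of $\eH$ by Bernstein's description of $Z(\eH)$, and hence commutes past $\lC_{v_1}$. This yields the asserted $\C_w = \chi_\lambda(\y)\lC_{v_1}\C_{w_0}\rC_{v_2}$.

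The main obstacle is the identity $\C_{w_0 y^\lambda v_2} = \chi_\lambda(\y)\C_{w_0 v_2}$, that is, promoting Lusztig's theorem (the case $v_2 = \idelm$) to arbitrary $v_2$ with $w_0 \cdot v_2$ reduced. The clean route above reduces this to the insensitivity of the \cite{B0} construction under replacing $w_0$ by $w_0 y^\lambda$, which rests entirely on the transferred absorbing relations. If one instead argues directly by the Kazhdan-Lusztig uniqueness characterization, one must verify that $\chi_\lambda(\y)\C_{w_0 v_2}$ -- which is bar-invariant once one knows $\chi_\lambda(\y)$ is bar-invariant -- is congruent to $T_{w_0 y^\lambda v_2}$ modulo $\ui\L$; this leading-term analysis is the delicate point, since $\L$ is not closed under multiplication and the translation $y^\lambda$ sits in the interior of the word rather than at an end. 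I would also double-check the reducedness claims for $w_0 \cdot y^\lambda v_2$ and $w_0 y^\lambda \cdot v_2$, together with the centrality (and, for the alternative route, bar-invariance) of $\chi_\lambda(\y)$, all of which are routine but load-bearing.
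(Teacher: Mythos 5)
Your outer steps are sound: applying Theorem \ref{main theorem} with right factor $y^\lambda v_2$ to get $\C_w = \lC_{v_1}\C_{w_0}\rC_{y^\lambda v_2} = \lC_{v_1}\C_{w_0 y^\lambda v_2}$ is legitimate (the needed reducedness is in the proof of Proposition \ref{p two-sided primitive}), and moving the central element $\chi_\lambda(\y)$ past $\lC_{v_1}$ at the end is fine. The gap is exactly at what you call the heart, and the principle you invoke there is false. The factorization $\C_{w_0 v} = \C_{w_0}\rC_v$ does \emph{not} rest on the absorbing relations alone: it also needs the leading-term property $\C_{w_0}T_z \equiv T_{w_0 z} \bmod \ui\L$ for all $z$ with $w_0 \cdot z$ reduced, which holds because $\C_{w_0}$ is supported on the finite group $W_f$ and every $u \in W_f$ has $u \cdot z$ reduced. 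For $\C_{w_0 y^\lambda}$ this property fails (elements $u \leq w_0 y^\lambda$ can end with $s_0$ while $z$ begins with $s_0$, so the cross terms $P'_{u,w_0y^\lambda}T_u T_z$ contribute positive powers of $\u$), and with it your conclusion. Here is a concrete refutation of your claimed identity "$\C_{w_0 y^\lambda v_2} = \C_{w_0y^\lambda}\rC_{v_2}$ for arbitrary $v_2$ with the reducedness hypotheses'': take $v_2 = y^\mu$ with $\mu \in Y_+$ nonzero. Then $w_0 \cdot y^\mu$ and $w_0 y^\lambda \cdot y^\mu$ are reduced, and $\C_{w_0y^\lambda}$ satisfies all the absorbing relations (left- and right-handed), so your argument --- which nowhere uses that $v_2^{-1}$ is primitive, a structural warning sign, since $(y^\mu)^{-1}$ is not primitive --- would give
\[ \C_{w_0 y^{\lambda+\mu}} \;=\; \C_{w_0y^\lambda}\rC_{y^\mu} \;=\; \chi_\lambda(\y)\C_{w_0}\rC_{y^\mu} \;=\; \chi_\lambda(\y)\C_{w_0y^\mu} \;=\; \chi_\lambda(\y)\chi_\mu(\y)\C_{w_0}, \]
whereas Theorem \ref{t Lusztig} gives $\C_{w_0y^{\lambda+\mu}} = \chi_{\lambda+\mu}(\y)\C_{w_0}$. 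Since $f \mapsto f\C_{w_0}$ is injective on the span of the $\y^\nu$ (expand in the Bernstein basis $\{\y^\nu T_u\}$), this would force $\chi_\lambda\chi_\mu = \chi_{\lambda+\mu}$, which is false already for $SL_2$ with $\lambda=\mu=\varpi_1$, where $\chi_{\varpi_1}^2 = \chi_{2\varpi_1}+1$. So the identity $\C_{w_0y^\lambda v_2} = \chi_\lambda(\y)\C_{w_0 v_2}$ is not a formal consequence of Lusztig's theorem plus absorbing relations; it genuinely requires $v_2^{-1}$ to be primitive, and your fallback route (checking $\chi_\lambda(\y)\C_{w_0v_2} \equiv T_{w_0y^\lambda v_2} \bmod \ui\L$ directly) is not routine either --- controlling the non-reduced cross terms is essentially the content of steps (A)--(D) of the paper's proof of Theorem \ref{main theorem}.

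The repair is to obtain the missing identity from Theorem \ref{main theorem} itself, applied a second time on the other side; this is where the primitivity of $v_2^{-1}$ supplied by Proposition \ref{p two-sided primitive} enters. Apply the theorem to the inverse element $(w_0 y^\lambda v_2)^{-1} = v_2^{-1}\cdot w_0 \cdot y^{-w_0\lambda}$: its hypotheses hold because $v_2^{-1}$ is primitive, $v_2^{-1}\cdot w_0$ is reduced (primitive elements are minimal in their left $W_f$-cosets, as $v_2(\alcove)\subseteq \boxo \subseteq \chamberf$), and $-w_0\lambda \in Y_+$. Together with Theorem \ref{t Lusztig}, the definition of $\lC,\rC$, and centrality, this gives
\[ \C_{v_2^{-1}w_0y^{-w_0\lambda}} \;=\; \lC_{v_2^{-1}}\C_{w_0}\rC_{y^{-w_0\lambda}} \;=\; \lC_{v_2^{-1}}\C_{w_0y^{-w_0\lambda}} \;=\; \chi_{-w_0\lambda}(\y)\,\C_{v_2^{-1}w_0}. \]
Now apply the anti-automorphism $T_x \mapsto T_{x^{-1}}$ of $\eH$, which sends $\C_x \mapsto \C_{x^{-1}}$ and sends $\chi_{-w_0\lambda}(\y) \mapsto \chi_{\lambda}(\y)$ (apply it to Theorem \ref{t Lusztig} and use the same injectivity as above). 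The display becomes exactly $\C_{w_0y^\lambda v_2} = \chi_\lambda(\y)\C_{w_0 v_2} = \chi_\lambda(\y)\C_{w_0}\rC_{v_2}$, and your assembly then completes the proof. In short: the corollary follows from the paper's ingredients by using Theorem \ref{main theorem} \emph{twice} (once for $v_1$, once, via inverses, for $v_2^{-1}$), not by extending the parabolic factorization of \cite{B0} to $w_0y^\lambda$.
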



We will need the two general lemmas about Coxeter groups that follow.  Their proofs are straightforward and are given in the appendix.

\begin{lemma} \label{reducedlemma}
For any $x, y \in W_e$, $x \cdot y$ is a reduced factorization if and only if $x(y(R_{+}) \cap R_{-}) \subseteq R_{-}$.
\end{lemma}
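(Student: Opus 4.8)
The plan is to reduce everything to the length formula $\ell(w) = |R_- \cap w(R_+)|$ from \textsection\ref{ss root system}. This formula persists in $\eW$: since every $\pi \in \Pi$ satisfies $\pi(R_+) = R_+$ and $\ell(\pi v) = \ell(v)$ (see \textsection\ref{ss extended affine} and Proposition \ref{pi mult prop}), writing $w = \pi v$ gives $R_- \cap w(R_+) = \pi\bigl(R_- \cap v(R_+)\bigr)$ and hence $|R_- \cap w(R_+)| = \ell(v) = \ell(w)$. Writing $N(w) := \{\alpha \in R_+ : w(\alpha) \in R_-\}$, the bijection $\alpha \mapsto w(\alpha)$ identifies $N(w)$ with $R_- \cap w(R_+)$, so $|N(w)| = \ell(w)$. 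I would then analyze $N(xy)$ directly by sorting each $\alpha \in R_+$ according to the signs of $y(\alpha)$ and of $xy(\alpha)$.

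Sorting $R_+$ this way produces four disjoint pieces, and reading off memberships gives
\[ N(xy) = \{\alpha : y(\alpha) \in R_+,\, xy(\alpha) \in R_-\} \;\sqcup\; \{\alpha : y(\alpha) \in R_-,\, xy(\alpha) \in R_-\}, \]
together with the splitting $N(y) = \{\alpha : y(\alpha) \in R_-,\, xy(\alpha) \in R_+\} \sqcup \{\alpha : y(\alpha) \in R_-,\, xy(\alpha) \in R_-\}$. The hypothesis $x(y(R_+) \cap R_-) \subseteq R_-$ says precisely that the set $E := \{\alpha \in R_+ : y(\alpha) \in R_-,\, xy(\alpha) \in R_+\}$ is empty. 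Substituting $\beta = y(\alpha)$ also identifies the first summand of $N(xy)$ with $N(x) \cap y(R_+)$, so that $|N(xy)| = |N(x) \cap y(R_+)| + (\ell(y) - |E|)$, while always $|N(x) \cap y(R_+)| \le \ell(x)$.

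The forward direction is then immediate: if $\ell(xy) = \ell(x) + \ell(y)$, the displayed count together with $|N(x) \cap y(R_+)| \le \ell(x)$ forces $|E| = 0$, which is the asserted condition. For the converse I would isolate the one step that is not pure bookkeeping, namely that the hypothesis forces $N(x) \subseteq y(R_+)$. Indeed, if some $\beta \in N(x)$ were not in $y(R_+)$, then $-\beta \in y(R_+)$, and since $\beta \in R_+$ this puts $-\beta \in y(R_+) \cap R_-$; applying the hypothesis to $-\beta$ gives $x(-\beta) \in R_-$, i.e.\ $x(\beta) \in R_+$, contradicting $\beta \in N(x)$. With $N(x) \subseteq y(R_+)$ in hand we get $|N(x) \cap y(R_+)| = \ell(x)$, and with $E$ empty the count yields $\ell(xy) = \ell(x) + \ell(y)$.

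The only genuine idea is this last observation: although the hypothesis a priori constrains only the finitely many roots of $y(R_+) \cap R_-$, the sign-flip $\beta \mapsto -\beta$ shows it secretly controls $N(x)$ as well, which is exactly what is needed to run the inequality in reverse. Everything else is the standard four-way sign classification, and since it uses only the length formula and the fact that $w$ permutes $R$ with $w(-\alpha) = -w(\alpha)$, it carries over unchanged from $W$ to $\eW$.
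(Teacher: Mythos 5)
Your proof is correct and takes essentially the same route as the paper's: both arguments classify roots by the sign patterns of $\alpha$, $y(\alpha)$, $xy(\alpha)$ and use the involution $\alpha \mapsto -\alpha$ to match the two ``bad'' classes, so your count $\ell(xy) = |N(x)\cap y(R_+)| + \ell(y) - |E|$ together with the sign-flip step is exactly the paper's identity $\ell(x)+\ell(y) = \ell(xy) + 2N_{+-+}$ in disguise. Your explicit verification that the length formula $\ell(w) = |R_-\cap w(R_+)|$ persists in the extended group $W_e$ is a detail the paper leaves implicit, but it does not change the substance of the argument.
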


The next lemma holds for any Weyl group, but it is stated in the less general setting in which it will be applied.
\begin{lemma}\label{long z helper 2}
Suppose $a,y\in W_f$ and $\alpha\in \fcoroots_+$.
\setcounter{ctr}{0}
\begin{list}{\emph{(\roman{ctr})}} {\usecounter{ctr} \setlength{\itemsep}{1pt} \setlength{\topsep}{2pt}}
\item If $s_{\alpha}a < a$, then $s_{\alpha}ay < ay \iff a^{-1}s_{\alpha}ay > y$.
\item If $s_{\alpha}a > a$, then $s_{\alpha}ay > ay \iff a^{-1}s_{\alpha}ay > y$.
\end{list}
\end{lemma}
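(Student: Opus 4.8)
The plan is to reduce both biconditionals to a single sign condition on the root $y^{-1}(\beta)$, where I set $\beta := a^{-1}(\alpha)$. The key algebraic fact is that a Weyl group acts on its reflections through its action on roots, so that $a^{-1}s_\alpha a = s_{\beta}$; in particular the element $a^{-1}s_\alpha a\, y$ appearing on the right of each statement is exactly $s_\beta y$. Everything then follows by repeated application of the reflection criterion (\ref{e hyperplane separate}).

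First I would dispose of the left-hand inequalities. Since $\alpha \in \fcoroots_+ \subseteq R_+$, applying (\ref{e hyperplane separate}) to the reflection $s_\alpha$ and the element $ay$ gives $s_\alpha a y < ay \iff (ay)^{-1}(\alpha) \in R_-$, and $(ay)^{-1}(\alpha) = y^{-1}a^{-1}(\alpha) = y^{-1}(\beta)$. Thus the left-hand condition in each case is governed purely by the sign of $y^{-1}(\beta)$: it is $y^{-1}(\beta) \in R_-$ in (i) and $y^{-1}(\beta) \in R_+$ in (ii).

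Next I would translate the right-hand condition $s_\beta y > y$, and this is where the hypotheses on $s_\alpha a$ and the split into two cases enter. By (\ref{e hyperplane separate}) again (with $\alpha > 0$), the hypothesis $s_\alpha a < a$ in (i) is equivalent to $\beta = a^{-1}(\alpha) \in R_-$, while $s_\alpha a > a$ in (ii) says $\beta \in R_+$. So in case (ii), $\beta$ is already positive and the criterion gives directly $s_\beta y > y \iff y^{-1}(\beta) \in R_+$, matching the left-hand condition. In case (i), $\beta \in R_-$, so I would rewrite $s_\beta = s_{-\beta}$ with $-\beta \in R_+$ and apply the criterion to the positive root $-\beta$, obtaining $s_\beta y > y \iff y^{-1}(-\beta) \in R_+ \iff y^{-1}(\beta) \in R_-$, which again matches.

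The only care required is the sign bookkeeping in this last step: $\beta$ is negative precisely in case (i), so applying the length criterion to $s_\beta$ forces the passage to $-\beta$ that flips the inequality on $y^{-1}(\beta)$, and this flip is exactly what accounts for the left-hand inequality being ``$<$'' in (i) but ``$>$'' in (ii). I expect no substantive obstacle beyond tracking these signs consistently.
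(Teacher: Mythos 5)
Your proposal is correct and follows essentially the same route as the paper's own proof: both reduce each biconditional to the sign of $y^{-1}(\beta)$ via the criterion (\ref{e hyperplane separate}) and the conjugation identity $a^{-1}s_\alpha a = s_{a^{-1}(\alpha)}$. The only cosmetic difference is that the paper builds the sign flip into the definition of $\beta$ in case (i) (taking $\beta = -a^{-1}(\alpha) \in \fcoroots_+$), whereas you keep $\beta = a^{-1}(\alpha)$ uniformly and pass to $-\beta$ when applying the criterion.
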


The next proposition is the crux of the proof of Theorem \ref{main theorem} and is the only place the primitive assumption is used directly.
\begin{proposition}\label{key_factorization_proposition}
Let $x,\; z \in W_e$ such that $x \cdot w_0$ and $w_0 \cdot z$ are reduced factorizations.  Then $x \cdot w_0 \cdot z$ is a reduced factorization.  Furthermore, if $x$ is primitive, then $x \cdot y \cdot z$ is a reduced factorization for every $y \in W_f$ with $\ell(y) = \ell(w_0)-1$.
\end{proposition}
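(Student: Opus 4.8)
The plan is to reduce everything, via Lemma \ref{reducedlemma}, to controlling the finite parts and $\delta$-coefficients of the roots that the outer factor $x$ must keep negative. Since $w_0$ is the longest element of $W_f$, the hypotheses translate cleanly: $x\cdot w_0$ reduced means $x$ is minimal in $xW_f$ (so $x(\alpha_i)\in R_+$ for $i\in[n]$), and $w_0\cdot z$ reduced means $z$ is minimal in $W_fz$ (so $z^{-1}(\alpha_i)\in R_+$, i.e. $z^{-1}\in W_e^{S_f}$). Writing affine roots as $\nu+k\delta$ with $\nu\in\fcoroots$, I record that $w_0(R_+)\cap R_-=\fcoroots_-$ and that every positive affine root has $\delta$-coefficient $\geq 0$. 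The engine of the whole argument is the following geometric lemma, which I would prove first: \emph{for $w\in W_e^{S_f}$, every $\beta\in N(w):=\{\beta\in R_+ : w(\beta)\in R_-\}$ lies in $\fcoroots_-+\ZZ_{>0}\delta$}. Geometrically, $w\in W_e^{S_f}$ means $w^{-1}(\alcove)\subseteq\chamberf$, and $\beta\in N(w)$ iff $h_\beta$ separates $\alcove$ from $w^{-1}(\alcove)$; since both alcoves lie in the open dominant chamber $\chamberf$, the separating hyperplane meets $\chamberf$, and among positive roots $h_{\nu+k\delta}=\{x\in H:\langle\nu,x\rangle=-k\}$ meets $\chamberf$ only when $\nu\in\fcoroots_-$ and $k>0$.

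For the first statement I would prove the stronger fact that $x\cdot(w_0z)$ is reduced; since $w_0\cdot z$ is reduced this yields $\ell(xw_0z)=\ell(x)+\ell(w_0)+\ell(z)$. By Lemma \ref{reducedlemma} it suffices to show $x\big((w_0z)(R_+)\cap R_-\big)\subseteq R_-$. Using the standard identity $N(w_0z)=N(z)\sqcup z^{-1}(\fcoroots_+)$ for the reduced product $w_0\cdot z$, the set $(w_0z)(R_+)\cap R_-$ splits as $\fcoroots_-\ \sqcup\ w_0\big(z(N(z))\big)$. On $\fcoroots_-$ the hypothesis $x\cdot w_0$ reduced already gives $x(\fcoroots_-)\subseteq R_-$. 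For the other piece, the Key Lemma applied to $z^{-1}\in W_e^{S_f}$ gives $z(N(z))=-N(z^{-1})\subseteq\fcoroots_++\ZZ_{<0}\delta$, and applying $w_0$ lands each such root in $\fcoroots_-+\ZZ_{<0}\delta$. For $\eta=\xi+k\delta$ with $\xi\in\fcoroots_-$ and $k<0$, I have $x(\eta)=x(\xi)+k\delta$; because $x(\alpha_i)\in R_+$ (so its $\delta$-coefficient is $\geq 0$), $x$ sends $\fcoroots_-$ to roots of $\delta$-coefficient $\leq 0$, whence the $\delta$-coefficient of $x(\eta)$ is $\leq k<0$ and $x(\eta)\in R_-$. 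This is the main computation of the first statement.

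For the furthermore, write $y=w_0s_i$ with $s_i$ a finite simple reflection (these are exactly the length-$(\ell(w_0)-1)$ elements, each covered by $w_0$), and use $w_0s_i=s_{d(i)}w_0$ with $d$ the diagram automorphism of \textsection\ref{ss w0}. Since primitive $x$ is minimal in $xW_f$, $x\cdot y$ is reduced, so it suffices to show $(xy)\cdot z$ is reduced, i.e. $x\big(y(z\beta)\big)\in R_-$ for all $\beta\in N(z)$. From the first part, $z\beta=\nu+k\delta$ with $\nu\in\fcoroots_+$ and $k<0$, and $y(z\beta)=s_{d(i)}(w_0\nu)+k\delta$ with $w_0\nu\in\fcoroots_-$. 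If $s_{d(i)}(w_0\nu)\in\fcoroots_-$ the argument of the first part applies verbatim. The single new case is $w_0\nu=-\alpha_{d(i)}$, where $s_{d(i)}(w_0\nu)=\alpha_{d(i)}\in\fcoroots_+$ and $y(z\beta)=\alpha_{d(i)}+k\delta$. This is exactly where primitivity is used: it bounds the $\delta$-coefficient of $x(\alpha_{d(i)})$ by $1$, since $x(\alpha_{d(i)})\in(\fcoroots_-+\delta)\cup\fcoroots_+$. Then $x\big(y(z\beta)\big)=x(\alpha_{d(i)})+k\delta$ has $\delta$-coefficient at most $1+k\leq 0$, and a direct check of the boundary case $k=-1$ (where the result has $\delta$-coefficient $0$ and finite part in $\fcoroots_-$) shows it lies in $R_-$ in every case.

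The main obstacle is the Key Lemma, and in particular the sharp conclusion that inversions of a minimal coset representative have \emph{negative} finite part rather than merely nonnegative $\delta$-coefficient; the geometric observation that both alcoves sit in the dominant chamber, so only the hyperplanes $h_{\nu+k\delta}$ with $\nu\in\fcoroots_-,\ k>0$ can separate them, is what makes this precise. Granting it, both statements become bookkeeping on $\delta$-coefficients, with the only genuinely delicate point being the borderline $k=-1$ case in the furthermore—which is precisely where, and why, the bound $\leq 1$ coming from primitivity is indispensable, matching the assertion that this proposition is the only place the primitive hypothesis is used directly.
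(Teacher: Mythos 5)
Your proof is correct. The skeleton matches the paper's: both arguments funnel everything through Lemma~\ref{reducedlemma}, both rest on the fact that each root of $z(N(z))$ (writing $N(w)=\{\beta\in R_+ : w(\beta)\in R_-\}$) has finite part in $\fcoroots_+$ and strictly negative $\delta$-coefficient, and both end with the same dichotomy, primitivity entering only when the finite part, after applying $y=w_0s_i=s_{d(i)}w_0$, becomes $\alpha_{d(i)}$, so that $x(\alpha_{d(i)})\in(\fcoroots_-+\delta)\cup\fcoroots_+$ together with $k\leq -1$ forces negativity. Where you genuinely differ is in organization and in how that key sub-fact is established. The paper proves $(xw_0)\cdot z$ reduced directly, obtaining $\alpha'\in\fcoroots_+$ in $z(\alpha)=\alpha'-k\delta$ by an algebraic contradiction: if $\alpha'\in\fcoroots_-$ then $w_0z(\alpha+k\delta)\in\fcoroots_+$ would violate Lemma~\ref{reducedlemma} applied to $w_0\cdot z$. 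You instead prove $x\cdot(w_0z)$ reduced, split $N(w_0z)=N(z)\sqcup z^{-1}(\fcoroots_+)$ via the cocycle identity for reduced products, and derive the sub-fact from a standalone geometric lemma: inversions of $z^{-1}$ correspond to hyperplanes separating $\alcove$ from $z(\alcove)$, both of which lie in the convex region $\chamberf$, and the only hyperplanes of positive roots meeting $\chamberf$ are the $h_{\nu+k\delta}$ with $\nu\in\fcoroots_-$, $k>0$. The paper's inline argument is shorter and stays entirely within the algebraic formalism it has already set up; your geometric lemma is more conceptual, explains \emph{why} inversions of minimal coset representatives have this shape, and absorbs in one stroke the bound $k\geq 1$ that the paper re-derives in passing. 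Your other variation --- handling the generic pieces by pure $\delta$-coefficient bookkeeping, using linearity and $x(\alpha_i)\in R_+$ to bound the $\delta$-coefficient of $x(\fcoroots_-)$ by $0$ --- is interchangeable with the paper's direct appeal to $x(\fcoroots_-)\subseteq R_-$; and you correctly recognized that for the piece $\fcoroots_-$ of $(w_0z)(R_+)\cap R_-$ the $\delta$-coefficient bound alone would not suffice, falling back there on the full strength of $x\cdot w_0$ reduced.
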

\begin{proof}
Given $\alpha \in R_{+}$, suppose $z(\alpha) \in R_-$.  We will show that $x w_0 z(\alpha)$ remains in $R_-$, which by Lemma \ref{reducedlemma} shows $x w_0 \cdot z$ is a reduced factorization.  Write $z(\alpha) = \alpha' - k \delta$, for some $\alpha' \in \fcoroots$ and $k \geq 0$.  In fact, $\alpha' \in \fcoroots_+$ since otherwise $z(\alpha + k\delta) \in \fcoroots_- \subseteq R_-$ and $w_0 z(\alpha + k\delta) \in \fcoroots_+ \subseteq R_+$, contradicting $w_0 \cdot z$ reduced by Lemma \ref{reducedlemma}.  Now
$$x w_0 z(\alpha) = x w_0 (\alpha' - k \delta) = x(w_0(\alpha')) - k \delta.$$
The root $x(w_0(\alpha'))$ is in $R_-$ because $x \cdot w_0$ is reduced and $\alpha' \in R_+$ while $w_0(\alpha') \in R_-$.  Therefore $x(w_0(\alpha')) - k \delta$ is in $R_-$, as desired.

Now suppose $x$ is primitive and $y \in W_f$ with $\ell(y) = \ell(w_0)-1$.  By \textsection \ref{ss w0}, we have $y = w_0 s_i$ for some $i \in [n]$.  Certainly $x \cdot y$ is reduced since a reduced factorization for $xy$ in terms of simple reflections can be obtained from one for $x w_0$ that ends in $s_i$, by deleting that last $s_i$. Proceed as in the first part of the proof by considering any $\alpha \in R_{+}$ such that $z(\alpha) \in R_-$, and showing that $x y z(\alpha)$ remains in $R_-$.  As above, $z(\alpha) = \alpha' - k \delta \in R_-$ with $\alpha' \in \fcoroots_+$, and hence $k \geq 1$.  We may assume $\alpha' = \alpha_i$ because if not, the same argument used above works. Then
$$x y z(\alpha) = x(y(\alpha')) - k \delta = x(\alpha_j) - k \delta,$$
where $j \in [n]$ so that $\alpha_j = w_0(-\alpha_i)$ (see \textsection\ref{ss w0}).  Now $x$  primitive implies $x(\alpha_j) \in (\fcoroots_-+\delta) \cup \fcoroots_+$ and thus $xyz(\alpha) = x(\alpha_j) - k \delta \in R_-$,  as $k \geq 1$.
\end{proof}

\subsection{}
Here we prove two technical lemmas whose significance may not become clear until seeing their application in the main thread of the proof in \textsection\ref{main proof section}.

\newcommand{\sz}{\ensuremath{z}}
\newcommand{\lz}{\ensuremath{\tilde{z}}}

Given $x \in W_a$ and an element $\lz = u y^{\beta} v$ factored as in (\ref{3factors eq}), we will say $\lz$ is \emph{large with respect to $x$} if any (all) of the following equivalent conditions is satisfied:
\be
\label{e zlarge}
\begin{array}{rl}
\text{(i)}&\langle \beta, \alpha_i \rangle \  >> \ell(x) + \ell(w_0)\text{ for }i \in [n]. \\
\text{(ii)}& \lz(\alcove) \text{ is far from the affine hyperplanes } h_\alpha \text{ (i.e. }\langle \alpha, \lz(\alcove) \rangle \\&>> \ell(x) + \ell(w_0)\text{) for all } \alpha \in \fcoroots.
\end{array}
\ee

The next lemma is the only place the largeness assumption is used directly.


Before stating the lemma, define the homomorphism
\be \Psi: W_a \cong W_f \ltimes Q'_f \twoheadrightarrow W_f \ee
to be the projection onto the first factor. Geometrically, this map can be understood by the action of $W_a$ on $X^\vee$. This action leaves stable the level 0 plane $\{x\in X^\vee_\RR:\langle\delta,x\rangle=0\} \cong Y$, and $\Psi$ is given by restricting the action of $W_a$ to this plane. On simple reflections, $\Psi$ is given by
\be
\Psi(s_i) =
\left\{ \begin{array}{ll}
s_i & \text{if $i \in [n]$}, \\
s_{\phi'} & \text{if } i = 0.
\end{array} \right.
\ee

\begin{lemma}\label{long z helper 1} If $x \in W_a$, $\lz = u y^{\beta} v$ as in (\ref{3factors eq}), and $\lz$ large with respect to $x$, then
\setcounter{ctr}{0}
\begin{list}{(\roman{ctr})} {\usecounter{ctr} \setlength{\itemsep}{1pt} \setlength{\topsep}{2pt}}
\item if $x \lz = u' y^{\beta'} v$ is the unique factorization of (\ref{3factors eq}), then $u' = \Psi(x) u \in W_f$, i.e., $\lj{(x\lz)}{S_f} = \Psi(x)\lj{\lz}{S_f} = \Psi(x)u$,
\item $s_0 \lz < \lz$ if and only if $\Psi(s_0) \lz > \lz$,
\item if $u = \idelm$, then $s_0 x \lz < x \lz \iff \Psi(s_0 x) > \Psi(x)$.  Similarly, if $u =\idelm$ and $i \in [n]$, then $s_i x \lz < x \lz \iff \Psi(s_i x) < \Psi(x)$.
\end{list}
\end{lemma}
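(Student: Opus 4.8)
The plan is to reduce all three statements to the computation of the \emph{signs} of the pairings $\langle\alpha,\lz(\alcove)\rangle$ and $\langle\alpha,x\lz(\alcove)\rangle$ for $\alpha\in\fcoroots$, since these signs record which finite chamber an alcove sits in and hence, via (\ref{e hyperplane separate}), control the relevant length inequalities; the largeness hypothesis is exactly what pins these signs down. Throughout I would write $x=y^\gamma w$ with $w=\Psi(x)\in W_f$ and $\gamma\in Q'_f$, so that on the level-$1$ plane $H$ the element $x$ acts by $p\mapsto w(p)+\gamma$ (using (\ref{e translation action}) together with $\langle\delta,p\rangle=1$ and that $W_e$ fixes $\delta$). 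The only quantitative input I need about $\gamma$ is that $|\langle(wu)^{-1}\gamma,\alpha_i\rangle|$ and $|\langle\theta,\gamma\rangle|$ are bounded in terms of $\ell(x)$, so that they are dominated by the largeness bound $\gg\ell(x)+\ell(w_0)$.

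Part (i) I would establish purely algebraically. Moving the translation to the left,
\[ x\lz = y^\gamma\,(wu)\,y^\beta v = y^{\gamma+(wu)(\beta)}\,(wu)\,v = (wu)\cdot y^{\beta+(wu)^{-1}\gamma}\cdot v, \]
which identifies $u'=wu=\Psi(x)u$, $\beta'=\beta+(wu)^{-1}\gamma$, and leaves the third factor equal to $v$. It then remains only to check that the right-hand side is genuinely the normal form (\ref{3factors eq}), i.e. that $y^{\beta'}v$ is minimal in its right coset; by the remark following (\ref{3factors eq}) it suffices that $\beta'\in Y_{++}$, and since $\langle\beta',\alpha_i\rangle=\langle\beta,\alpha_i\rangle+\langle(wu)^{-1}\gamma,\alpha_i\rangle$ with the first term huge and the second bounded by $\ell(x)$, largeness gives $\langle\beta',\alpha_i\rangle\geq 1$ for every $i$.

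For (ii) and (iii) I would work in the alcove picture. The simple affine reflection $s_0$ is the reflection in $h_{\alpha_0}=\{x\in H:\langle\theta,x\rangle=1\}$, while $\Psi(s_0)=s_{\phi'}$ is the reflection in the parallel wall $h_\theta=\{x\in H:\langle\theta,x\rangle=0\}$, and $\alcove$ lies on the side $0<\langle\theta,\cdot\rangle<1$. For (ii), (\ref{e hyperplane separate}) gives $s_0\lz<\lz$ iff $\lz(\alcove)$ lies on the side $\langle\theta,\cdot\rangle>1$, and $\Psi(s_0)\lz>\lz$ iff $\lz(\alcove)$ lies on the side $\langle\theta,\cdot\rangle>0$; since largeness forces $|\langle\theta,\lz(\alcove)\rangle|\gg 0$, this quantity is either $>1$ or $<0$, so the two conditions coincide, each detecting the sign of $\langle\theta,\lz(\alcove)\rangle$. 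For (iii) I would first invoke (i) with $u=\idelm$ to place $x\lz(\alcove)\subseteq\Psi(x)(\chamberf)$, which (as $\beta'$ is still large) stays far from the finite walls. For $i\in[n]$ the reflection $s_i$ is finite, so $s_ix\lz<x\lz$ iff $x\lz(\alcove)$ is on the negative side of $h_{\alpha_i}$ iff $\Psi(x)^{-1}(\alpha_i)\in\fcoroots_-$, which is precisely the descent condition $\Psi(s_ix)=s_i\Psi(x)<\Psi(x)$. For $s_0$ the same reasoning gives $s_0x\lz<x\lz$ iff $\langle\theta,x\lz(\alcove)\rangle>1$; writing $\langle\theta,x\lz(\alcove)\rangle=\langle w^{-1}\theta,\lz(\alcove)\rangle+\langle\theta,\gamma\rangle$ and using largeness of $\lz$ (so the first term dominates) shows this holds iff $\langle w^{-1}\theta,\lz(\alcove)\rangle>0$ iff $\Psi(x)^{-1}(\theta)\in\fcoroots_+$, i.e. iff $s_{\phi'}\Psi(x)=\Psi(s_0x)>\Psi(x)$.

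The main obstacle, and really the only nonformal point, is the sign-domination used in (ii) and in the $s_0$ case of (iii): I must be certain that the translation part $\gamma$ of $x$, whose effect is controlled by $\ell(x)$, cannot push $\langle\theta,x\lz(\alcove)\rangle$ across \emph{either} threshold $0$ or $1$. This is exactly the situation the bound $\gg\ell(x)+\ell(w_0)$ is designed to handle, but making it airtight requires the elementary estimate bounding $|\langle\theta,\gamma\rangle|$ (and $|\langle(wu)^{-1}\gamma,\alpha_i\rangle|$) by a constant multiple of $\ell(x)$; once that is in hand, everything else is careful bookkeeping of signs via (\ref{e hyperplane separate}) and the reflection descent criterion in $W_f$.
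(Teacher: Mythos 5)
Your argument is correct; part (ii) coincides with the paper's proof, but for (i) and (iii) you take a genuinely different, more directly geometric route. For (i) the paper reduces to the case $x=s_0$ and inducts on $\ell(x)$: each step is the computation $s_0 \tilde{z} = \Psi(s_0)u\, y^{u^{-1}s_{\phi'}(\phi')+\beta}v$, so the only quantitative input is the single-step bound on $\langle u^{-1}s_{\phi'}(\phi'),\alpha_i\rangle$, a constant depending on $W_f$ alone. Your one-shot computation $x\tilde{z} = (wu)\cdot y^{\beta+(wu)^{-1}\gamma}\cdot v$ is cleaner, but shifts the burden onto the global estimate $|\langle (wu)^{-1}\gamma,\alpha_i\rangle| \le C\,\ell(x)$, exactly the point you flag; this estimate is elementary (the translation part of a reduced word for $x \in W_a$ is a sum of at most $\ell(x)$ elements of the $W_f$-orbit of $\phi'$), and it is in effect what the paper's induction verifies one reflection at a time. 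For (iii) the difference is more substantial: the paper's proof is purely formal given (i) and (ii) --- apply (ii) to $x\tilde{z}$, then convert $\Psi(s_0)x\tilde{z} > x\tilde{z}$ into $\Psi(s_0 x) > \Psi(x)$ using length additivity of the reduced factorizations $x\tilde{z} = u'\cdot y^{\beta'}v$ and $\Psi(s_0)x\tilde{z} = \Psi(s_0)u'\cdot y^{\beta'}v$ --- with no further estimates, whereas you rerun the sign analysis on $\langle\theta, x\tilde{z}(\alcove)\rangle$ from scratch, again invoking $|\langle\theta,\gamma\rangle| \le C\,\ell(x)$. Both routes rest on the same largeness cushion; the paper's bookkeeping confines all estimates to (i) and (ii) and makes (iii) free of them, while yours buys a more transparent alcove picture at the cost of repeating the estimate. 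Once you supply the (easy) bound on the translation part of $x$, your proof is complete.
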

\begin{proof}
  Part (i) of the lemma follows easily from the case $x = s_0$.  In that case,
  $$ s_0 \lz = y^{\phi'} s_{\phi'} u \ y^{\beta} v =  s_{\phi'} u (u^{-1} s_{\phi'} y^{\phi'} s_{\phi'} u) \ y^{\beta} v =  \Psi(s_0) u \ y^{u^{-1}s_{\phi'}({\phi'})+\beta} v.$$
  Define $\beta' = u^{-1}s_{\phi'}({\phi'})+\beta$.  Since $\langle \beta, \alpha_i \rangle >> 0$, the same holds for $\beta'$ because $\langle u^{-1}s_{\phi'}({\phi'}), \alpha_i \rangle$ is bounded by a constant depending only on $W_f$.  As mentioned in \textsection\ref{ss extended affine}, $\beta'\in Y_{++}$ implies that $\Psi(s_0) u \ y^{\beta'} v$ is the desired factorization for $s_0 \lz$. That $\lz$ is large with respect to $x$ ensures that we can multiply $\lz$ on the left by $\ell(x)$ simple reflections to obtain $u'y^{\beta'} v$ and still have $\beta' \in Y_{++}$.


For statement (ii) we have the equivalences
\be
\begin{array}{ccc}
s_0\lz < \lz &\iff  &\\
\alcove \text{ and } \lz(\alcove) \text{ are on opposite sides of } h_{\alpha_0} & \iff & \\
\alcove \text{ and } \lz(\alcove) \text{ are on the same side of } h_\theta & \iff & \\
\Psi(s_0) \lz > \lz
\end{array}
\ee
where the first and the third equivalences follow from (\ref{e hyperplane separate}) and the second from (\ref{e zlarge}).

By applying statement (ii) to $x \lz$, we have $s_0 x \lz < x \lz$ if and only if $\Psi(s_0)  x \lz > x \lz$.  Next use that the factorizations $x \lz = u' \cdot y^{\beta'} v$ and $\Psi(s_0) x \lz= \Psi(s_0) u' \cdot y^{\beta'} v$ are reduced to conclude  the left-hand equality of
\be \ell(\Psi(s_0)  x \lz) - \ell(x \lz) = \ell(\Psi(s_0) u') - \ell(u')\ = \ell(\Psi(s_0x)) - \ell(\Psi(x)). \ee
The right-hand equality is the substitution $u' = \Psi(x)u = \Psi(x)$, which uses statement (i) and the assumption $u = \idelm$. Therefore, $\Psi(s_0) x \lz > x \lz$ if and only if  $\Psi(s_0x) > \Psi(x)$ giving the first part of statement (iii). The statement for $i \in [n]$ has the same proof except with the statement (ii) reference replaced by the triviality $s_i x \lz < x \lz \iff \Psi(s_i)  x \lz < x \lz$.
\end{proof}

\begin{example}Lemma \ref{long z helper 1} (iii) is fairly intuitive in type $A$. For example,  for $G = SL_5$, let $\lz^{\ng1} = \ng27\ \ng13\ 4\ 16\ 35$ (with the convention of \textsection\ref{ss type A}), and $x^{\ng1}$ be any of the six possibilities shown below.
\[
\begin{array}{rcccccccccccccc}
\multicolumn{1}{c}{x^{-1}} && \multicolumn{5}{c}{\lz^{-1}x^{-1}} &&& \multicolumn{5}{c}{\Psi(x^{-1})} \\
\idelm && \ng27&\ng13&4&16&35&&\ &1&2&3&4&5&\\
s_0 &&30&\ng13&4&16&\ng22&< &\ &5&2&3&4&1&>\\
s_0s_4&& 30&\ng13&4&\ng22&16&<&\ &5&2&3&1&4&<\\
s_0s_4s_1 &&\ng13&30&4&\ng22&16&<&\ &2&5&3&1&4&<\\
s_0s_4s_1s_0 && 11&30&4&\ng22&\ng8&<&\ &4&5&3&1&2&>\\
s_0s_4s_1s_0s_1 && 30& 11&4&\ng22&\ng8&>&\ &5&4&3&1&2&>
\end{array}
\]
The third (resp. fifth) column compares the length of an element $\lz^{-1}x^{-1}$ (resp. $\Psi(x^{-1})$) to the element immediately above it. To make these length comparisons, we use the fact that $ws_i > w$ if and only if $w_i < w_{i+1}$, where $w_1 \dots w_n$ is the word of $w$ (see \cite{X2}). Lemma \ref{long z helper 1} (iii) says that the inequalities in the third and fifth columns will match exactly when the value for $x^{-1}$ differs from the value above it by right-multiplication by some $s \in S_f$.
\end{example}

Let $x \in W_a$, $\lz \in W_e$ such that $x \cdot w_0$ and $w_0 \cdot \lz$ are reduced.  Choose reduced factorizations $x = s_{i_1} s_{i_2} \ldots s_{i_l}$, $\lz = s_{j_1} s_{j_2} \ldots s_{j_k}$.  Fix $y \in W_f$ and note that $x \cdot y$ and $y \cdot z$ are also reduced factorizations.  Suppose $s_{i_1} \in L(s_{i_2} \ldots s_{i_l} y \lz)$.  Then by the Strong Exchange Condition (see, e.g., \cite[\textsection5.8]{Hu}) there is an $r\in [k]$ such that
$$x y \lz = s_{i_1} s_{i_2} \ldots s_{i_l} y s_{j_1} s_{j_2} \ldots s_{j_k} = s_{i_2} \ldots s_{i_l} y s_{j_1} s_{j_2} \ldots \hat{s}_{j_r} \ldots s_{j_k}.$$
The Strong Exchange Condition only says that if $s\in L(w)$ (in this case, $w = s_{i_1}xy\lz$, $s = s_{i_1}$), then in any expression for $w$ as a product of simple reflections, $sw$ can be obtained by omitting one.  In this case however, $x \cdot y$ reduced implies the omitted reflection must occur in the expression for $\lz$.  Define $\lz'$ by $s_{j_1} s_{j_2} \ldots \hat{s}_{j_r} \ldots s_{j_k} = u\cdot\lz'$, where $u\in W_f$, $\lz'$ minimal in $W_f\lz'$. Also put $x' = s_{i_2} \ldots s_{i_l}$ and $y' = yu$. We then have
\begin{equation}\label{long_z_lemma eq}
xyz =   s_{i_1} s_{i_2} \ldots s_{i_l} y \lz  = s_{i_2} \ldots s_{i_l} y' \lz' = x'y'\lz'.
\end{equation}
We now can state a tricky lemma.

\begin{lemma}\label{long_z_lemma}
  With the notation above, if $\lz$ (and therefore $\lz'$) are large with respect to $x$, then $y'>y$.
\end{lemma}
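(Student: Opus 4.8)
The plan is to push everything down to the finite Weyl group via the projection $\Psi$ and then finish with Lemma \ref{long z helper 2}. Write $a=\Psi(x')$ and let $s_\alpha=\Psi(s_{i_1})$, where $\alpha=\alpha_{i_1}\in\fcoroots_+$ if $i_1\neq 0$ and $\alpha=\theta$ (the highest coroot, so $s_\theta=s_{\phi'}$) if $i_1=0$; thus $\Psi(x)=s_\alpha a$. First I would compute the finite dominant part of $xyz$ in two ways. Since $w_0\cdot\lz$ reduced forces $\lz$ to be minimal in $W_f\lz$, largeness makes $y\cdot\lz$ and $y'\cdot\lz'$ reduced, so $y\lz$ has finite part $y$ and $y'\lz'$ has finite part $y'$. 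Applying Lemma \ref{long z helper 1}(i) (which computes finite dominant parts) to the products $x\cdot(y\lz)$ and $x'\cdot(y'\lz')$ — both legitimate because $\lz$ and $\lz'$ are large — gives $\Psi(x)\,y=\Psi(x')\,y'$, hence
\[ y'=a^{-1}s_\alpha a\, y. \]
In particular $y'$ and $y$ differ by the reflection $a^{-1}s_\alpha a$, so it suffices to prove $a^{-1}s_\alpha a\,y>y$, which is precisely the right-hand side of both parts of Lemma \ref{long z helper 2}.

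To feed that lemma I must supply its two left-hand inputs: the comparison of $s_\alpha a\,y$ with $a y$, and the comparison of $s_\alpha a$ with $a$. For the first, I would apply Lemma \ref{long z helper 1}(iii) to the element $X:=x'y\in W_a$, noting $\Psi(X)=ay$ and that $\lz$ is large with respect to $X$ since $\ell(X)\le\ell(x)+\ell(w_0)$. The hypothesis $s_{i_1}\in L(x'y\lz)=L(X\lz)$ then reads as $s_\alpha a y<ay$ when $i_1\neq 0$ and as $s_\alpha a y>ay$ when $i_1=0$ (the $s_0$ case carries the sign flip built into Lemma \ref{long z helper 1}). For the second comparison I would run the same device on $X:=x'w_0$. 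Here Proposition \ref{key_factorization_proposition} is essential: it shows $x\cdot w_0\cdot\lz$ is reduced, and the same proposition applied to $x'$ (whose product $x'\cdot w_0$ is reduced, being a suffix of a reduced word for $x\cdot w_0$) shows $x'\cdot w_0\cdot\lz$ is reduced; comparing lengths then gives $s_{i_1}\notin L(x'w_0\lz)$. Feeding this non-descent into Lemma \ref{long z helper 1}(iii), now with $\Psi(X)=aw_0$, yields $s_\alpha a w_0>aw_0$ for $i_1\neq 0$ and $s_\alpha a w_0<aw_0$ for $i_1=0$; since right multiplication by $w_0$ reverses the length order on $W_f$, these become $s_\alpha a<a$ and $s_\alpha a>a$ respectively.

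With both inputs secured the conclusion is immediate: for $i_1\neq 0$ I am in case (i) of Lemma \ref{long z helper 2} ($s_\alpha a<a$) together with $s_\alpha a y<ay$, and for $i_1=0$ I am in case (ii) ($s_\alpha a>a$) together with $s_\alpha a y>ay$; each case returns $a^{-1}s_\alpha a\,y>y$, that is, $y'>y$. The main obstacle, and the reason the lemma is delicate, is exactly the determination of whether $s_\alpha a<a$ or $s_\alpha a>a$: the descent information coming from $s_{i_1}\in L(x'y\lz)$ alone does \emph{not} decide this, and choosing the wrong case of Lemma \ref{long z helper 2} flips the conclusion to $y'<y$. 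It is the reducedness of $x\cdot w_0\cdot\lz$ — the sole place where the $w_0$-structure and Proposition \ref{key_factorization_proposition} enter — that pins down the correct case, with the $s_0$ sign flip in Lemma \ref{long z helper 1} matching the order reversal produced by passing through $w_0$. The remaining task is the routine verification that the largeness bound on $\lz$ is strong enough to license every application of Lemma \ref{long z helper 1} (to $x'y$, $x'w_0$, and $x'$), which it is because each of these elements has length at most $\ell(x)+\ell(w_0)$.
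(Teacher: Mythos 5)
Your proposal is correct and is essentially the paper's own proof: both derive $y'=a^{-1}\Psi(s_{i_1})a\,y$ by equating finite parts via Lemma \ref{long z helper 1}(i), obtain the two needed comparisons by applying Lemma \ref{long z helper 1}(iii) to $x'y$ and to $x'w_0$ (the latter using Proposition \ref{key_factorization_proposition} to get $s_{i_1}\notin L(x'w_0\tilde{z})$ and then the order reversal by $w_0$ from \textsection\ref{ss w0}), and conclude with the two cases of Lemma \ref{long z helper 2} ($\alpha=\alpha_{i_1}$ when $i_1\neq 0$, $\alpha=\theta$ when $i_1=0$). Your largeness bookkeeping and the sign flip in the $s_0$ case match the paper's argument exactly, so there is nothing to add.
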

\begin{example}
It may be helpful to follow the proof with an example. Let $\lz^{\ng1} = \ng27\ \ng13\ 4\ 16\ 35$, $y^{\ng1} = 4\ 3\ 1\ 2\ 5$, and $x^{\ng1}$ be the six possibilities shown below.
\[
\begin{array}{ccccccccccccccccccc}
\multicolumn{1}{c}{x^{-1}} &  \multicolumn{5}{c}{z^{-1}y^{-1}x^{-1}} &&& \multicolumn{5}{c}{\Psi(y^{-1}x^{-1})} & &\multicolumn{5}{c}{y'^{-1}}\\
\idelm & 16& 4& \ng27& \ng13& 35 & &\ & 4& 3& 1& 2& 5 &&&&&\\
s_0 &30& 4& \ng27& \ng13& 21 &<&\ & 5& 3& 1& 2& 4 &\ & 5 & 3 & 1 & 2 & 4\\
s_0s_4 &30& 4& \ng27& 21& \ng13 &>&\ & 5& 3& 1& 4& 2 & \\
s_0s_4s_1 & 4& 30& \ng27& 21& \ng13&< &\ & 3& 5& 1& 4& 2 &\ & 4& 5& 1& 2& 3\\
s_0s_4s_1s_0 & \ng18& 30& \ng27& 21& 9 &> &\ & 2& 5& 1& 4& 3 &\\
s_0s_4s_1s_0s_2 & \ng18& \ng27& 30& 21& 9&< &\  & 2& 1& 5& 4 & 3&\  & 4& 3& 5& 2& 1
\end{array}
\]
The third column indicates whether this value for $z^{-1}y^{-1}x^{-1}$ is less than or greater than the value immediately above it. The values for $y'^{-1}$ may be computed by $y'^{-1} =  \Psi(y^{-1}x^{-1}x')$ (see the proof below). According to Lemma \ref{long_z_lemma}, we must have  $5\ 3\ 1\ 2\ 4 > 4\ 3\ 1\ 2\ 5$ as $s_0 \in L(y\lz)$, $4\ 5\ 1\ 2\ 3 > 4\ 3\ 1\ 2\ 5$ as $s_1\in L(s_4s_0y\lz)$, and $4\ 3\ 5\ 2\ 1 > 4\ 3\ 1\ 2\ 5$ as $s_2\in L(s_0s_1s_4s_0y\lz)$.
\end{example}
\begin{proof}[Proof of Lemma \ref{long_z_lemma}]
Since $w_0\cdot \lz$ and $w_0\cdot \lz'$ are reduced, $\Psi(xy)=\Psi(x'y')$ by Lemma \ref{long z helper 1}(i) applied to the left- (resp. right-) hand side of (\ref{long_z_lemma eq}) with $xy$ (resp. $x'y)$ for $x$ of the lemma.  Put $a=\Psi(x')\in W_f$. Then
\be y'=a^{-1}\Psi(s_{i_{1}}) a y. \ee

First suppose $i_1 \neq 0$. Then $s_{i_1}\in L(x'y\lz)$ implies $s_{i_1} = \Psi(s_{i_1}) \in L(ay)$ by Lemma \ref{long z helper 1}(iii). On the other hand, $x\cdot w_0\cdot \lz$ and $(s_{i_{1}}x)\cdot w_0\cdot \lz$ are reduced so $s_{i_1}\notin L(x'w_0 \lz)$ implies $\Psi(s_{i_1}) \notin L(aw_0)$ again by Lemma \ref{long z helper 1}(iii). By \textsection\ref{ss w0} this implies $s_{i_1}\in L(a)$ and applying Lemma \ref{long z helper 2}(i) with $\alpha = \alpha_{i_1}$ yields the desired result.

If $i_1=0$, then $s_{i_1}\in L(x'y\lz)$ implies $\Psi(s_{i_1}) ay > ay$ by Lemma \ref{long z helper 1}(iii). On the other hand, $x\cdot w_0\cdot \lz$ and $(s_{i_{1}}x)\cdot w_0\cdot \lz$ are reduced so $s_{i_1}\notin L(x'w_0 \lz)$ implies $\Psi(s_{i_1}) a w_0 < aw_0$ again by Lemma \ref{long z helper 1}(iii). By \textsection\ref{ss w0} this implies $\Psi(s_{i_1}) a > a$ and applying Lemma \ref{long z helper 2}(ii) with $\alpha = \theta$ yields the desired result.
\end{proof}

\subsection{}
We need a basic lemma about multiplying $T$'s before giving the proof of Theorem \ref{main theorem}. For $w_1, w_2 \in W_e$, define the structure coefficients $f_{w_1, w_2, w_3} \in A$ by
\be T_{w_1} T_{w_2} = \sum_{w_3 \in W_e} f_{w_1, w_2, w_3} T_{w_3}. \ee
Let $\xi$ be the element $\u - \ui \in A$.
\begin{lemma}
\label{l structure coefficients}
The coefficients $f_{w_1, w_2, w_3}$ are polynomials in $\xi$ with non-negative integer coefficients.
\end{lemma}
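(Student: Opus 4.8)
The plan is to reduce the general product $T_{w_1} T_{w_2}$ to a sequence of multiplications by a single generator $T_s$, $s \in S$, and then to track how the $\xi = \u - \ui$ polynomial structure propagates through each such elementary step. The key observation is that the defining relations (\ref{hecke eq}) already encode all the needed information: the braid relation $T_u T_v = T_{uv}$ for reduced factorizations involves the coefficient $1$ (a degree-zero polynomial in $\xi$ with coefficient $1 \in \ZZ_{\geq 0}$), while the quadratic relation can be rewritten as $T_s^2 = \xi T_s + 1$, whose coefficients are $\xi$ and $1$, again polynomials in $\xi$ with non-negative integer coefficients.

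\textbf{First step: the single-generator multiplication rule.} I would first establish the base case that for any $s \in S$ and $w \in W_e$, the product $T_s T_w$ expands with coefficients in $\ZZ_{\geq 0}[\xi]$. This follows directly from the two cases of the relations: if $sw = s \cdot w$ is reduced then $T_s T_w = T_{sw}$, giving the single coefficient $1$; if instead $sw < w$ (so $w = s \cdot (sw)$ with $\ell(sw) = \ell(w) - 1$), then writing $T_w = T_s T_{sw}$ and applying $T_s^2 = \xi T_s + 1$ gives $T_s T_w = T_s^2 T_{sw} = \xi T_s T_{sw} + T_{sw} = \xi T_w + T_{sw}$, whose coefficients $\xi$ and $1$ lie in $\ZZ_{\geq 0}[\xi]$. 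The extended case (multiplication by $T_\pi$, $\pi \in \Pi$) is even simpler, since $T_\pi T_w = T_{\pi w}$ always has coefficient $1$.

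\textbf{Second step: induction on length.} I would then induct on $\ell(w_1)$ to handle a general left factor. Choosing a reduced expression $w_1 = s \cdot w_1'$ with $\ell(w_1') = \ell(w_1) - 1$, we have $T_{w_1} = T_s T_{w_1'}$, so
\[
T_{w_1} T_{w_2} = T_s \bigl( T_{w_1'} T_{w_2} \bigr) = T_s \sum_{w_3'} f_{w_1', w_2, w_3'} T_{w_3'} = \sum_{w_3'} f_{w_1', w_2, w_3'} \, (T_s T_{w_3'}).
\]
By the inductive hypothesis each $f_{w_1', w_2, w_3'} \in \ZZ_{\geq 0}[\xi]$, and by the base case each $T_s T_{w_3'}$ re-expands with coefficients in $\ZZ_{\geq 0}[\xi]$. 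Since $\ZZ_{\geq 0}[\xi]$ is closed under addition and multiplication, collecting the coefficient of a fixed $T_{w_3}$ gives a finite sum of products of elements of $\ZZ_{\geq 0}[\xi]$, hence again an element of $\ZZ_{\geq 0}[\xi]$. This completes the induction, the base of which ($\ell(w_1) = 0$, possibly after absorbing a $\Pi$-factor) is trivial.

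\textbf{Anticipated obstacle.} I expect the only genuinely delicate point is bookkeeping in the extended (non-Coxeter) setting: ensuring that the reduction $T_{w_1} = T_s T_{w_1'}$ is legitimate when $w_1$ carries a nontrivial $\Pi$-component. This is handled by the realization of $\eH$ as the twisted group algebra $\Pi \cdot \H(W_a)$, writing $w_1 = \pi w_a$ with $\pi \in \Pi$ and $w_a \in W_a$, peeling off $T_\pi$ first (coefficient $1$) and then inducting within $\H(W_a)$, which is an honest Coxeter Hecke algebra where the Strong Exchange argument and the above relations apply verbatim. No cancellation can occur because every coefficient produced is a sum of monomials with positive sign, so non-negativity is preserved throughout.
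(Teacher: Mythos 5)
Your proof is correct and follows essentially the same approach as the paper: choose a reduced word for $w_1$, multiply one generator at a time, and use $T_s^2 = \xi T_s + 1$ so that every elementary step produces coefficients in $\ZZ_{\geq 0}[\xi]$. The paper merely unrolls your induction into an explicit closed formula (a sum over subsets of the reduced word with indicator factors), while you are in fact slightly more careful than the paper about the $\Pi$-component in the extended setting.
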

\begin{proof}
Write $w_1 = s_{i_1} \cdot s_{i_2} \cdot \ldots \cdot s_{i_l}$ as a reduced product of simple reflections.  For any subset $A = \{j_1, j_2, \dots, j_{|A|} \} \subseteq [l]$ ($j_1 < j_2 < \dots < j_{|A|}$), define $x_A = s_{i_{j_1}} \ldots s_{i_{j_{|A|}}}$. Also define the indicator function $I(P)$ of a statement $P$ to take the value $1$ if $P$ is true and $0$ if P is false.  By a direct calculation, the product $T_{w_1} T_{w_2} = T_{i_1} T_{i_2} \ldots T_{i_l} T_{w_2}$ is equal to
\begin{equation}\label{brute force equation1}
  \sum_{A \subseteq [l]} \xi^{l-|A|} T_{x_A w_2} \prod_{j \notin A} I\big( s_{i_j} \in L(x_{\{k \in A: k >j\}} w_2) \big),
\end{equation}
which implies the desired result.
\end{proof}

We will also use the important observation
\be
\label{e deg xi deg u}
\begin{array}{c}
\text{For any $f$ in $A$ that is a polynomial in $\xi$ with non-negative}\\  \text{integer coefficients, there holds $\deg_\xi(f) = \deg_\u(f)$.}
\end{array}
\ee

\subsection{} \label{main proof section}
\begin{proof}[Proof of Theorem \ref{main theorem}]
It is convenient to assume $v_1 \in W_a$, and this is possible because we can always write $v_1 = \pi v_1'$ with $\pi \in \Pi$ and $v_1' \in W_a$.  Then the theorem for $v_1'$ gives it for $v_1$ as $\C_{v_1 w_0 v_2}=\pi \C_{v_1' w_0 v_2} = \pi \lC_{v_1'} \C_{w_0} \rC_{v_2} = \lC_{v_1} \C_{w_0} \rC_{v_2}$.  This uses that $v_1'$ is primitive (Proposition \ref{pi mult prop}).

Begin by expanding out the product of canonical bases as follows:
\begin{equation}\label{factorization equation}
 \lC_{v_1} \C_{w_0} \rC_{v_2} = \big(\sum \lP_{x,v_1} T_x \big) \big(\sum_{y \in W_f} \uyw T_y \big) \big(\sum \rP_{z,v_2} T_z \big).
\end{equation}
The first sum is over $x \leq v_1$ such that $x\cdot w_0$ is reduced, and the third sum is over $z \leq v_2$ such that $w_0\cdot z$ is reduced.  The canonical basis element $\C_{v_1 w_0 v_2}$ is characterized by being bar invariant and equivalent to $T_{v_1 w_0 v_2} \mod \ui \L$.  Therefore, to prove the theorem it suffices to show that the only term in the expansion of (\ref{factorization equation}) not in $\ui \L$ is $\lP_{v_{1},v_{1}} T_{v_{1}} \ T_{w_0} \ \rP_{v_{2},v_{2}} T_{v_2} = T_{v_1 w_0 v_2}$ (which are equal by Proposition \ref{key_factorization_proposition}).  The proof takes four steps (A)--(D).  In (A) and (B) it is shown that a term in the expansion of (\ref{factorization equation}) is in $\ui \L$ provided $z$ is large.  The intuition is that it is only easier to get a large power of $\u$ if $z$ is large.  This is made precise by (C) and (D), which reduce the general case to the case $z$ is large.

\medskip{\it \noindent \emph{\textbf{(A)}}} Write $v_1 =  s_{i_1} \cdot s_{i_2} \cdot \ldots \cdot s_{i_l}$ as a reduced product of simple reflections.  Then for any $k \in \{0, \ldots, l\}$ and $y \in W_f$ with $y \neq w_0$,
\[ \uyw T_{s_{i_1} \ldots s_{i_k}} T_{s_{i_{k+1}} \ldots s_{i_l} y \lz} \in \ui \L\] provided $\lz$ is large with respect to $v_1$. In particular,
\[\uyw  T_{v_1} T_{y} T_{\lz} \in \ui \L.\]

\smallskip We will prove the main statement of (A) by induction on $\ell(w_0) - \ell(y)+ k$.  The case $\ell(w_0) - \ell(y) = 1$ and $k$ arbitrary holds because
$$T_{s_{i_1} \ldots s_{i_k}} T_{s_{i_{k+1}} \ldots s_{i_l} y \lz} = T_{s_{i_1} \ldots s_{i_k}} T_{s_{i_{k+1}} \ldots s_{i_l}} T_{y \lz} = T_{v_1}  T_{y \lz} = T_{v_1 y \lz} $$
by Propositions \ref{key_factorization_proposition} and \ref{partial primitive prop}.  The result is trivial for $k=0$. Now assume $k > 0$. If $s_{i_k} \notin L(s_{i_{k+1}} \ldots s_{i_l} y \lz)$, then
$$T_{s_{i_1} \ldots s_{i_k}} T_{s_{i_{k+1}} \ldots s_{i_l} y \lz} = T_{s_{i_1} \ldots s_{i_{k-1}}} T_{s_{i_{k}} \ldots s_{i_l} y \lz}$$ and we are done by induction.

Now suppose $s_{i_k} \in L(s_{i_{k+1}} \ldots s_{i_l} y \lz)$.  By Lemma \ref{long_z_lemma},
$$ s_{i_k} \ldots s_{i_l} y \lz = s_{i_{k+1}} \ldots s_{i_l} y' \lz',$$
where $\ell(y') > \ell(y)$, $y' \in W_f$, $\lz'$ large with respect to $v_1$, and $w_0 \cdot \lz'$ reduced.  Now compute
$$ T_{s_{i_k}} T_{s_{i_{k+1}} \ldots s_{i_l} y \lz} = \xi T_{s_{i_{k+1}} \ldots s_{i_l} y \lz} +
T_{s_{i_k}} \ldots s_{i_l} y \lz$$
$$ = \xi T_{s_{i_k} s_{i_{k+1}} \ldots s_{i_l} y' \lz'} +
T_{s_{i_k} \ldots s_{i_l} y \lz}.$$
Multiplying on the left by $\uyw T_{s_{i_1} \ldots s_{i_{k-1}}}$, we obtain
$$ \uyw T_{s_{i_1} \ldots s_{i_k}} T_{s_{i_{k+1}} \ldots s_{i_l} y \lz} = $$
$$ \uyw \xi T_{s_{i_1} \ldots s_{i_{k-1}}} T_{s_{i_{k}} \ldots s_{i_l} y' \lz'} +
\uyw T_{s_{i_1} \ldots s_{i_{k-1}}} T_{s_{i_{k}} \ldots s_{i_l} y \lz}.$$
The first term is in $\ui \L$ by induction since $(\u)^{\ell(y')-\ell(w_0)}$ has at least as high a power of $\u$ as $\uyw \xi$ (tracing this induction back to the base case involves at most $\ell(w_0)$ changes to $\lz$, so the largeness assumption remains valid).  The second term is in $\ui \L$ by the inductive statement with $k$ decreased by 1.


\smallskip{\it \noindent \emph{\textbf{(B)}}} The product $\uyw \lP_{x, v_1} T_{x}  T_y T_{\lz} \in \ui \L$ for $x < v_1$, $x\cdot w_0$ reduced, $y \in W_f$, and $\lz$ large with respect to $x$.

\smallskip The proof is the same as that for (A)  except that the base case is for $y = w_0$, which holds since $\lP_{x,v_1}$ is a polynomial in $\ui$ with no constant term and $x \cdot w_0 \cdot \lz$  is reduced by Proposition \ref{key_factorization_proposition}.

\smallskip{\it \noindent \emph{\textbf{(C)}}} Given $x, \sz \in W_e$ such that $w_0 \cdot \sz$ is reduced, there exists a $v \in W_e$ so that $\lzy := \sz v$ is large with respect to $x$ and $\sz \cdot v$ and $w_0 \cdot \lzy$ are reduced factorizations.

Choose any $\lambda$ such that $\langle \lambda, \alpha_i \rangle >> \ell(\sz) + \ell(x) + \ell(w_0)$ for $i \in [n]$ and put $v:=\sz^{-1} \lzy$.  We will use Lemma \ref{reducedlemma} to show that $\sz \cdot v$ is reduced.  It is convenient to instead show $v^{-1} \cdot \sz^{-1}$ is reduced. Suppose $\alpha + k\delta \in R_+$ with $\alpha \in \fcoroots$ and that $\sz^{-1}(\alpha+k\delta) \in R_-$.  Then $\sz^{-1}(\alpha) \in R_-$, and because the product $w_0 \cdot \sz$ is reduced, we must have $\alpha \in \fcoroots_-$ (a similar fact was shown in Proposition \ref{key_factorization_proposition}).   Now $v^{-1} \sz^{-1} (\alpha+k\delta) = \alpha + (k+ \langle \lambda, \alpha \rangle) \delta$.  The integer $k+ \langle \lambda, \alpha \rangle << 0$ because $\sz^{-1}(\alpha+k\delta) \in R_-$ implies $k$ is bounded by a constant times $\ell(\sz)$.  Therefore $v^{-1} \sz^{-1} (\alpha+k\delta) \in R_-$, as desired.

\smallskip{\it \noindent \emph{\textbf{(D)}}} Any term $\lP_{x, v_1} \uyw \rP_{z, v_2} T_x T_{y \sz} $ from the expansion of (\ref{factorization equation}) is in $\ui \L$.

\smallskip
Choose $\lz$ large with respect to $x$, as was shown to exist in (C), such that there exists $v$ so that $\lz = z \cdot v$ is reduced. Then compute
\be T_x T_{y\lz} = T_x T_{y \sz} T_v = \left(\sum_{a \in W_e} f_{x,y \sz,a} T_a\right) T_v = \sum_{b \in W_e} \left(\sum_{a \in W_e} f_{x,y \sz,a} f_{a,v,b}\right)T_b. \ee
By (\ref{e deg xi deg u}), the highest power of $\u$ occurring in $T_x T_{y \sz}$ is  $\max_a(\deg_\xi(f_{x,y \sz,a}))$. Let $a' \in W_e$ be an element with  $f_{x,y \sz,a'}$ realizing this maximum degree and $b' \in W_e$ an element with  $f_{a',v,b}$ nonzero. Then since the $f$'s are polynomials in $\xi$ with non-negative coefficients (Lemma \ref{l structure coefficients}), \be \deg_\xi(f_{x,y\sz,a'})) \leq \deg_\xi \left(\sum_{a \in W_e} f_{x,yz,a}f_{a,v,b'}\right).\ee  Moreover, again by (\ref{e deg xi deg u}), the right-hand side of this inequality is the $\u$-degree of the coefficient of  $T_{b'}$ in $T_x T_{y\lz}$.
Thus $\lP_{x, v_1} \uyw T_{x} T_{y \lz}$ in $\ui \L$ (by (A) and (B)) implies the same for $\lP_{x, v_1} \uyw \rP_{z, v_2} T_{x} T_{y \sz}$, as desired.
\end{proof}

\section{Concluding remarks}

We have tried to find an analog of Theorem \ref{main theorem} for $w$ not in $W_{(\nu)}$, or just in the finite Weyl group setting replacing $w_0$ with the longest element $\lj{w_0}{J}$ of some parabolic subgroup $J$. One can ask, for instance, for which $v_1,v_2\in W_f$ the identity $\C_{v_1 \lj{w_0}{J} v_2}=\lC_{v_1}\C_{w_0^J}\rC_{v_2}$ holds. We concluded after a cursory investigation that this holds so rarely that it wouldn't be of much use. We could certainly have overlooked something, but it's more likely that a nice extension of this result requires that a factorization like the above holds but only after quotienting by some submodule spanned by a subset of the canonical basis.

\section{Appendix}

\begin{lemma*}
For any $x, y \in W_e$, $x \cdot y$ is a reduced factorization if and only if $x(y(R_{+}) \cap R_{-}) \subseteq R_{-}$.
\end{lemma*}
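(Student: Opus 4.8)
The plan is to reduce everything to the length formula $\ell(w) = |w(R_+) \cap R_-|$ recalled in \textsection\ref{ss root system}, and to count as explicitly as possible which positive roots are sent to negative roots by $xy$. First I would partition $R_+$ according to the sign of its image under $y$, writing $R_+ = (R_+ \cap y^{-1}(R_+)) \sqcup (R_+ \cap y^{-1}(R_-))$, and apply $xy$. Since $xy$ is a bijection of $R$ preserving the splitting $R = R_+ \sqcup R_-$, this gives
\[ \ell(xy) = |xy(R_+) \cap R_-| = |x(y(R_+)\cap R_+)\cap R_-| + |x(y(R_+)\cap R_-)\cap R_-|. \]
Call these two summands $T_1$ and $T_2$, so $\ell(xy) = T_1 + T_2$.

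Next I would bound each summand. For $T_2$, since $x$ is a bijection, $T_2 \leq |y(R_+) \cap R_-| = \ell(y)$, with equality exactly when $x(y(R_+) \cap R_-) \subseteq R_-$, which is precisely the asserted condition. For $T_1$, the inclusion $x(y(R_+)\cap R_+) \cap R_- \subseteq x(R_+) \cap R_-$ gives $T_1 \leq \ell(x)$, with equality exactly when every $\gamma \in R_+$ satisfying $x(\gamma) \in R_-$ already lies in $y(R_+)$; that is, the equality condition for $T_1$ is $\{\gamma \in R_+ : x\gamma \in R_-\} \cap y(R_-) = \emptyset$.

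The crux is to verify that these two equality conditions coincide, and here I would use the sign-flip $\gamma \mapsto -\gamma$: an element $\gamma \in R_+ \cap y(R_-)$ with $x\gamma \in R_-$ corresponds bijectively to the element $-\gamma \in R_- \cap y(R_+) = y(R_+)\cap R_-$ with $x(-\gamma) \in R_+$, i.e. to a witness that $x(y(R_+)\cap R_-) \not\subseteq R_-$. Hence the $T_1$-condition fails if and only if the $T_2$-condition fails, so the two are equivalent.

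Finally, since $T_1 \leq \ell(x)$ and $T_2 \leq \ell(y)$ force $\ell(xy) \leq \ell(x)+\ell(y)$ with equality if and only if both $T_1 = \ell(x)$ and $T_2 = \ell(y)$, and these two equalities hold together by the previous paragraph, I conclude that $x \cdot y$ is reduced if and only if $x(y(R_+)\cap R_-)\subseteq R_-$, as claimed. I expect the sign-flip bijection to be the only genuinely delicate point; the remaining steps are bookkeeping with the bijectivity of $xy$ on $R$. (As a byproduct, the same computation yields the clean identity $\ell(xy) = \ell(x) + \ell(y) - 2\,|\{\gamma \in R_+ : x\gamma \in R_-\} \cap y(R_-)|$, which makes the equivalence transparent.)
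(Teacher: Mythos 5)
Your proof is correct and follows essentially the same route as the paper's: your $T_1$, $T_2$ and the two defect sets are exactly the paper's sign-pattern counts $N_{-++}$, $N_{--+}$, $N_{-+-}$, $N_{+-+}$, and your sign-flip bijection $\gamma \mapsto -\gamma$ is precisely the paper's key observation that $\alpha$ contributes to $N_{-+-}$ if and only if $-\alpha$ contributes to $N_{+-+}$. Indeed, your ``byproduct'' identity $\ell(xy) = \ell(x)+\ell(y) - 2\,\bigl|\{\gamma \in R_+ : x\gamma \in R_-\}\cap y(R_-)\bigr|$ is just a rearrangement of the paper's $\ell(x)+\ell(y) = \ell(xy) + 2N_{+-+}$.
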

\begin{proof}
Given $\alpha \in R$, there are eight possibilities for the signs of $\alpha$, $y(\alpha)$, and $xy(\alpha)$.  Let
$$N_{\epsilon_1 \epsilon_2 \epsilon_3} = |\{\alpha \in R : xy(\alpha) \in R_{\epsilon_1}, \; y(\alpha) \in  R_{\epsilon_2}, \; \alpha \in  R_{\epsilon_3} \}|$$
where $\epsilon_i \in \{+, -\}$.    With this notation, $x(y(R_{+}) \cap R_{-}) \cap R_+$ has cardinality $N_{+ - +}$, so the condition $x(y(R_{+}) \cap R_{-})\subseteq R_{-}$ is equivalent to $N_{+ - +} = 0$.
We have
\begin{equation*}
\begin{array}{lll}
\ell(xy) & = & N_{- - +} + N_{- + +}, \\
\ell(y) & = & N_{- - +} + N_{+ - +}, \\
\ell(x) & = & N_{- + +} + N_{- + -}.
\end{array}
\end{equation*}
A root $\alpha$ contributes to $N_{- + -}$ if and only if $-\alpha$ contributes to $N_{+ - +}$.  Hence
$$ \ell(x) + \ell(y) = N_{- - +} +N_{+ - +} + N_{- + +} + N_{- + -} = \ell(xy) + 2N_{+ - +}.$$
Therefore $x \cdot y$ is a reduced factorization if and only if $N_{+ - +} = 0$.
\end{proof}

\begin{lemma*}
Suppose $a,y\in W_f$ and $\alpha\in \fcoroots_+$.
\setcounter{ctr}{0}
\begin{list}{\emph{(\roman{ctr})}} {\usecounter{ctr} \setlength{\itemsep}{1pt} \setlength{\topsep}{2pt}}
\item If $s_{\alpha}a < a$, then $s_{\alpha}ay < ay \iff a^{-1}s_{\alpha}ay > y$.
\item If $s_{\alpha}a > a$, then $s_{\alpha}ay > ay \iff a^{-1}s_{\alpha}ay > y$.
\end{list}
\end{lemma*}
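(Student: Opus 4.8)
The plan is to reduce everything to a single computation of how $\ell$ changes under right multiplication by $a$, using the standard length formula $\ell(w) = |R'^\vee_- \cap w(\fcoroots_+)|$ inside the finite Weyl group $W_f$. First I would set $t = a^{-1}s_\alpha a = s_{a^{-1}(\alpha)}$, which is again a reflection (in the root $\pm a^{-1}(\alpha)$), so the right-hand sides $a^{-1}s_\alpha a y > y$ read simply $ty > y$, i.e. the reflection $t$ increases the length of $y$. The left-hand sides $s_\alpha ay \lessgtr ay$ are about the reflection $s_\alpha$ acting on the element $ay$. So the content of the lemma is: \emph{if $s_\alpha a < a$ then ($s_\alpha \cdot ay$ goes down $\iff$ $t\cdot y$ goes up), and if $s_\alpha a > a$ then ($s_\alpha\cdot ay$ goes up $\iff$ $t\cdot y$ goes up).}

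The key step is the observation that $s_\alpha ay < ay \iff (ay)^{-1}(\alpha) = y^{-1}a^{-1}(\alpha) \in \fcoroots_-$, and likewise $ty > y \iff y(t^{-1}\cdots)$, more precisely $ty>y \iff y^{-1}t(\text{pos root of }t)\in\fcoroots_+$; writing $t = s_\beta$ with $\beta = a^{-1}(\alpha)$ (note $\beta$ may be positive or negative), $ty>y \iff y^{-1}(\beta)\in\fcoroots_+$ when $\beta\in\fcoroots_+$, and $\iff y^{-1}(\beta)\in\fcoroots_-$ when $\beta\in\fcoroots_-$. Meanwhile $s_\alpha a < a \iff a^{-1}(\alpha)=\beta\in\fcoroots_-$, and $s_\alpha a > a \iff \beta\in\fcoroots_+$ (using $\alpha\in\fcoroots_+$ and the criterion $s_\alpha a < a \iff a^{-1}(\alpha)\in\fcoroots_-$). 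So in case (i), $\beta\in\fcoroots_-$: then $s_\alpha ay < ay \iff y^{-1}(\beta)\in\fcoroots_-$, which is exactly $ty>y$. In case (ii), $\beta\in\fcoroots_+$: then $s_\alpha ay > ay \iff (ay)^{-1}(\alpha)=y^{-1}(\beta)\in\fcoroots_+$, which is again exactly $ty>y$. In both cases the two conditions coincide, which is the claim.

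The main obstacle — really the only place to be careful — is bookkeeping the sign of $\beta = a^{-1}(\alpha)$ and the attendant fact that $t=s_\beta=s_{-\beta}$, so that the "positive root of the reflection $t$" is $|\beta|$, and translating "$ty>y$" correctly into a statement about $y^{-1}(\beta)$ with the sign that matches. Once that dictionary is fixed, cases (i) and (ii) drop out by matching the sign of $\beta$ forced by the hypothesis $s_\alpha a \lessgtr a$ against the sign condition characterizing $s_\alpha ay \lessgtr ay$. I would present the argument by first stating the three elementary length-reflection criteria (all instances of "$s_\gamma w < w \iff w^{-1}(\gamma)\in\fcoroots_-$" for $\gamma\in\fcoroots_+$, together with its variant for $\gamma\in\fcoroots_-$, or equivalently just use $w^{-1}$ of the unique positive root of the reflection), then doing the two cases in two lines each.
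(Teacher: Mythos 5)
Your proposal is correct and follows essentially the same route as the paper's proof: both reduce the statement to the criterion $s_\gamma w < w \iff w^{-1}(\gamma)\in\fcoroots_-$ applied to $a$, $ay$, and $y$, with the only cosmetic difference being that the paper defines $\beta$ to be the positive root of the reflection $a^{-1}s_\alpha a$ in each case, whereas you keep $\beta=a^{-1}(\alpha)$ and track its sign explicitly. No gaps.
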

\begin{proof}
    For (i), we have $s_{\alpha} a < a$ implies $a^{-1}(\alpha) \in \fcoroots_-$ by  (\ref{e hyperplane separate}). The element $a^{-1}s_{\alpha}a$ is a reflection $s_{\beta}$ where $\beta=-a^{-1}(\alpha) \in \fcoroots_+$. Now compute
    \[y^{-1}(\beta)=y^{-1}(-a^{-1}(\alpha))=-(ay)^{-1}(\alpha).\]
    Hence
    \[s_{\alpha} ay < ay \iff -(ay)^{-1}(\alpha)\in \fcoroots_+ \iff y^{-1}(\beta) \in \fcoroots_+ \iff s_{\beta}y>y,\]
    where the first and last equivalence again use (\ref{e hyperplane separate}).
    The proof of (ii) is the same except with $\beta$ defined to be $a^{-1}(\alpha) \in \fcoroots_+$ instead of $-a^{-1}(\alpha)$.
\end{proof}

\section*{Acknowledgments}
This paper would not have been possible without the generous advice from and many detailed discussions with Mark Haiman.  I am also grateful to Michael Phillips, Ryo Masuda, and Kristofer Henriksson for help typing and typesetting figures.

 \end{document}